\newtheorem{conj}{Conjecture}
\newtheorem{rmk}{Remark}
\newtheorem{theorem}{Theorem}
\newtheorem{proposition}{Proposition}
\newtheorem{lemma}{Lemma}
\begin{document}
\title{The Gauss--Bonnet--Chern mass of higher codimension graphical manifolds
}
\author{Alexandre de Sousa\textsuperscript{1} \and Frederico Gir\~ao\textsuperscript{2}}
\footnotetext[1]{Alexandre de Sousa was a CAPES/Brazil doctoral fellow at Universidade Federal do Cear\'a.}
\footnotetext[2]{Frederico Gir\~ao was partially supported by CNPq/Brazil, grant number 483844/2013-6.}

\date{}
\maketitle
\begin{abstract}
We give an explicit formula for the Gauss--Bonnet--Chern mass of an asymptotically flat graphical manifold of arbitrary codimension and use it to prove the positive mass theorem and the Penrose inequality for graphs with flat normal bundle.
\end{abstract}
\section{Introduction}
\label{intro}
A complete Riemannian manifold $(M^n,g)$, $n \geq 3$, is said to be asymptotically flat of order $\tau$ (with one end) if there exists a compact subset $K$ of $M$ and a diffeomorphism $\Psi: M \setminus K \to \mathbb{R}^n \setminus \overline{B}_1(0)$, introducing coordinates in $M \setminus K$, say $x = (x_1, \ldots, x_n)$, such that, in these coordinates, 
\begin{equation} \label{decay1}
g_{ij} = \delta_{ij} + \sigma_{ij}
\end{equation}
and
\begin{equation} \label{decay2}
|\sigma_{ij}| + |x||\sigma_{ij,k}| + |x|^2|\sigma_{ij,kl}| = O(|x|^{-\tau}),
\end{equation}
where the $\sigma_{ij}$'s are the coefficients of $\sigma$ with respect to $x$, $\sigma_{ij,k} = \partial \sigma_{ij} / \partial x_k$, $\sigma_{ij,kl} = \partial^2 \sigma_{ij} / \partial x_k \partial x_l$, and $| \ |$ is the standard Euclidean norm. The ADM mass of $(M,g)$, introduced by Arnowitt, Deser and Misner in \cite{ADM} (see also \cite{GJ}) is defined by
\begin{equation} \label{ADM_mass}
\mathrm{m}_{\mathrm{ADM}} = \frac{1}{2(n-1)\omega_{n-1}} \lim_{r \to \infty} \int_{S_r} (g_{ij,i} - g_{ii,j})\nu^j dS_r,
\end{equation}
where $\omega_{n-1}$ is the volume of the unit sphere of dimension $(n-1)$, $S_r$ is the Euclidean coordinate sphere of radius $r$, $dS_r$ is the volume form of $S_r$ induced by the Euclidean metric, and $\nu = r^{-1}x$ is the outward unit normal to $S_r$ (with respect to the Euclidean metric).

It is known that if ${\tau > (n-2)/2}$ and the scalar curvature of $(M,g)$ is integrable, then the limit (\ref{ADM_mass}) exists, is finite, and is a geometric invariant, that is,  two coordinate systems satisfying (\ref{decay1}) and (\ref{decay2}) yield the same value for it \cite{B,C}.

One of the most important conjectures in Mathematical General Relativity is the famous Positive Mass Conjecture (PMC):

\begin{conj}
If $(M^n,g)$, $n \geq 3$, is an asymptotically flat Riemannian manifold of order $\tau > (n-2)/2$ whose scalar curvature is nonnegative and integrable, then the ADM mass of $(M,g)$ is nonnegative. Moreover, if the mass is zero, then $(M,g)$ is isometric to the Euclidean space $(\mathbb{R}^n,\delta)$.
\end{conj}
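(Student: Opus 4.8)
The plan is to follow one of the two classical routes to the positive mass theorem, since the conjecture as stated is precisely the (now largely resolved) Positive Mass Theorem. I would first attempt Witten's spinorial argument, which yields both positivity and rigidity cleanly under the additional hypothesis that $(M^n,g)$ is spin. The idea is to fix a constant spinor $\psi_0$ at infinity, using the asymptotically flat chart $\Psi$ to trivialize the spinor bundle near the end, and then solve the Dirac equation $D\psi = 0$ for a spinor $\psi$ asymptotic to $\psi_0$, with the difference $\psi - \psi_0$ decaying. Existence follows from the invertibility of the Dirac operator on suitable weighted Sobolev spaces, which in turn relies on the decay in $(\ref{decay2})$. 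The core computation is the Lichnerowicz--Weitzenb\"ock identity $D^2 = \nabla^*\nabla + R/4$; integrating $|D\psi|^2$ over a large coordinate ball and applying the divergence theorem converts the bulk into $\int \bigl(|\nabla\psi|^2 + \tfrac{R}{4}|\psi|^2\bigr)$ plus a boundary integral over $S_r$ whose limit, after a careful expansion using $(\ref{decay1})$ and the definition $(\ref{ADM_mass})$, is exactly $\mathrm{m}_{\mathrm{ADM}}|\psi_0|^2$ up to the normalizing constant.

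Positivity is then immediate: the bulk integrand is nonnegative because $R \geq 0$ by hypothesis and $|\nabla\psi|^2 \geq 0$, so the boundary term, and hence the mass, is nonnegative. For rigidity I would examine the equality case: $\mathrm{m}_{\mathrm{ADM}} = 0$ forces $\nabla\psi \equiv 0$ for every choice of $\psi_0$, so the manifold admits a maximal space of parallel spinors; this in turn forces the curvature to vanish, and combined with completeness and asymptotic flatness yields an isometry with $(\mathbb{R}^n,\delta)$.

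The main obstacle is twofold. First, Witten's argument requires $M$ to be spin, which is not among the hypotheses; to remove it one must pass to the geometric measure theory approach of Schoen and Yau. There, assuming for contradiction that $\mathrm{m}_{\mathrm{ADM}} < 0$, one produces an area-minimizing hypersurface whose existence is forced by the negative mass, and derives a contradiction by combining the stability inequality with the second variation formula and the Gauss equation, which propagate nonnegativity of scalar curvature to the slice and set up an inductive descent in dimension. The genuine difficulty here is the regularity of the minimizers: in dimensions $n \geq 8$ their singular sets obstruct the naive induction, so one must invoke the more recent regularity and generic-slicing techniques to carry the argument through. Second, in either approach the delicate analytic point is justifying that the boundary integral actually converges to $\mathrm{m}_{\mathrm{ADM}}$, which rests essentially on the order condition $\tau > (n-2)/2$ together with the second-derivative decay recorded in $(\ref{decay2})$.
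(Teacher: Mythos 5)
First, a point of framing: the statement you set out to prove is presented in this paper as a \emph{conjecture} (the Positive Mass Conjecture), and the paper does not prove it in the stated generality. What the paper actually establishes is the special case of Euclidean graphs: Theorem \ref{main 1} with $q=1$ (recall that $\mathrm{m}_1$ coincides with $\mathrm{m}_{\mathrm{ADM}}$) expresses the mass of a graphical manifold as a bulk integral of the scalar curvature plus a commutator term, via an entirely elementary mechanism: the ADM integrand is rewritten as the flux of the vector field $X_{(1)}=P_{(1)}^{ijkl}g_{jk,l}\partial_i$ defined in \eqref{vector field}, its Euclidean divergence is computed pointwise (Proposition \ref{field_div}), and the divergence theorem converts the limit \eqref{ADM_mass} into $\frac{1}{2}c_1(n)\int_M \bigl(R + \langle [T_{(1)\alpha},A_{\beta}]\cdot e_{\alpha}^{\top},e_{\beta}^{\top}\rangle\bigr)\frac{1}{\sqrt{G}}\,dM$, which is manifestly nonnegative when the normal bundle is flat and $R\geq 0$. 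No spinors, no minimal hypersurfaces, no weighted Sobolev theory --- but also no claim beyond graphs, and (as the paper's final remark concedes) no rigidity. So your route is genuinely different from the paper's; the comparison is that the paper trades generality for an explicit formula, while you are aiming at the general statement.

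That is where the genuine gap lies: your two classical routes, even combined, do not cover the hypotheses of the statement. Witten's spinorial argument --- which you sketch correctly, including the Lichnerowicz formula and the boundary-term identification --- requires $M$ to be spin, which is not assumed. The Schoen--Yau descent covers $n\leq 7$; for non-spin manifolds with $n\geq 8$ the singular sets of area-minimizing hypersurfaces break the induction, and your sentence ``invoke the more recent regularity and generic-slicing techniques to carry the argument through'' is not a step one can take as a citation: making that sentence rigorous is precisely the outstanding difficulty that kept this statement a conjecture (at the time of this paper the general high-dimensional non-spin case was regarded as open, and the later proposed resolutions are long, independent works, not black boxes available inside this argument). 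The same defect affects your rigidity claim, since the parallel-spinor argument again presupposes spin. In short, your proposal is an accurate survey of the two partial proofs and of where each stops, but it does not close the distance between them and the full statement; if your goal were instead the case the paper actually handles (graphs with flat normal bundle), the divergence-theorem computation above is both shorter and assumption-free with respect to spin and dimension.
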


The PMC was settled by Schoen and Yau when $n \leq 7$ \cite{SY1} and when $(M,g)$ is conformally flat \cite{SY2}, and by Witten when $M$ is spin \cite{W} (see also \cite{PT} and \cite{CB}). Very elegant proofs for the case when $(M,g)$ is an Euclidean graph were given by Lam \cite{L} (see also \cite{deLG}) for graphs of codimension one and by Mirandola and Vit\'orio \cite{MV} for graphs of arbitrary codimension with flat normal bundle (notice that the case of graphs also follows from Witten's argument, since an Euclidean graph is spin). The case of Euclidean hypersurfaces (not necessarily graphs), including the rigidity statement, was treated in \cite{HW1}, under appropriate decay conditions.

The Penrose Inequality (PI) is a conjectured sharpening of the PMC when $(M,g)$ has a compact boundary $\Gamma$ which is an outermost minimal hypersurface.
\begin{conj}
If $(M^n,g)$, $n \geq 3$, is an asymptotically flat Riemannian manifold of order $\tau > (n-2)/2$ whose scalar curvature is nonnegative (and integrable), and $\Gamma$ is a (possibly disconnected) outermost minimal hypersurface 
of area $A$, then
$$\mathrm{m}_{\mathrm{ADM}} \geq \frac{1}{2}\left(\frac{A}{\omega_{n-1}} \right)^{\frac{n-2}{n-1}}.$$
Moreover, if the equality holds, then $(M,g)$ is isometric to the Riemannian Schwarzschild manifold.
\end{conj}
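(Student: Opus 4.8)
The plan is to interpolate between the two sides of the inequality along a geometric flow of hypersurfaces issuing from the horizon, using a quasi-local mass that is monotone under the flow. Concretely, I would evolve a family $\Sigma_t$ of closed hypersurfaces in $M$ by inverse mean curvature flow, $\partial_t x = H^{-1}\nu$, with $\Sigma_0 = \Gamma$, and monitor the Geroch--Hawking mass $m_H(\Sigma_t)$, a scale-covariant functional that pairs the area $|\Sigma_t|$ raised to the power $(n-2)/(n-1)$ with a penalizing integral of the squared mean curvature over $\Sigma_t$. The argument closes by evaluating $m_H$ at the two ends of the flow: since $\Gamma$ is minimal we have $H \equiv 0$ on $\Sigma_0$, so $m_H(\Gamma)$ collapses exactly to $\tfrac{1}{2}(A/\omega_{n-1})^{(n-2)/(n-1)}$, the right-hand side of the conjecture; and since $(M,g)$ is asymptotically flat, the leaves $\Sigma_t$ round up to large coordinate spheres near infinity, along which a direct computation with the decay hypotheses (\ref{decay1})--(\ref{decay2}) gives $m_H(\Sigma_t) \to \mathrm{m}_{\mathrm{ADM}}$.

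The analytic heart is a Geroch-type monotonicity statement: $\tfrac{d}{dt}\, m_H(\Sigma_t) \geq 0$ whenever the scalar curvature of $(M,g)$ is nonnegative. To establish it I would first record the flow equations for the area element, the mean curvature $H$, and the second fundamental form of $\Sigma_t$, then differentiate $m_H$ and integrate by parts. Inserting the Gauss equation converts the intrinsic scalar curvature of the leaf into the ambient scalar curvature (which carries the favorable sign) plus a complete square in the traceless second fundamental form and a purely intrinsic curvature integral over $\Sigma_t$. In dimension three the latter is pinned by the Gauss--Bonnet theorem to a topological constant, and monotonicity follows; this is exactly the Huisken--Ilmanen mechanism.

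Because smooth inverse mean curvature flow generically develops singularities and the leaves may fail to remain connected or embedded, I would replace it by its weak level-set formulation: realize the $\Sigma_t$ as level sets $\{u = t\}$ of a proper function $u$ solving $\mathrm{div}(\nabla u/|\nabla u|) = |\nabla u|$ in the variational/viscosity sense, with jumps in the flow resolved by passing to the outward-minimizing hull. Lower semicontinuity of area under this hull operation, together with the fact that the hull does not increase area, keeps $m_H$ monotone across the jump times, so the endpoint comparison survives the weak flow.

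The main obstacle --- and the reason the statement remains a conjecture in full generality --- is the monotonicity step when $n > 3$. For $n > 3$ the intrinsic curvature integral produced by the Gauss equation is no longer a topological invariant and need not have a favorable sign, so the na\"ive Hawking mass is not monotone and the flow argument breaks. To push through one must either restrict to dimensions where the positive mass theorem is available and run Bray's conformal flow of metrics $g_t = u_t^{4/(n-2)} g$ in place of IMCF --- along which the ADM mass is non-increasing, the horizon area is preserved, and the geometry limits to Schwarzschild, valid for $n \leq 7$ by Schoen--Yau or for spin $M$ by Witten --- or construct a genuinely new monotone quasi-local mass adapted to higher-dimensional curvature integrands. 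I therefore expect no single flow to settle the unrestricted conjecture; the realistic deliverable is the inequality under a dimensional or spin hypothesis, with the equality case forced by rigidity in the monotonicity formula (vanishing traceless second fundamental form along the flow together with the induced flatness), which pins $(M,g)$ to the Riemannian Schwarzschild manifold.
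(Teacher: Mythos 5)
You were asked to prove what the paper itself states only as Conjecture~2: the paper contains no proof of this statement, and quotes it as background, attributing the known cases to Huisken--Ilmanen ($n=3$, $\Gamma$ connected), Bray ($n=3$, general $\Gamma$), and Bray--Lee ($n\leq 7$, with spin needed for rigidity). What the paper actually proves is the graphical analogue for the Gauss--Bonnet--Chern mass (Theorem~\ref{main 2}; for $q=1$ this recovers the Penrose inequality for graphs in the spirit of Lam and Mirandola--Vit\'orio), and it does so by a completely different, flow-free mechanism: the mass is rewritten via the divergence theorem applied to the vector field $X_{(q)} = P_{(q)}^{ijkl} g_{jk,l}\,\partial_i$, yielding a bulk term carrying the sign of $L_{(q)}$ plus a boundary integral of $H_{(2q-1)}$ over $\Sigma$, which is then bounded below by the Guan--Li quermassintegral inequality for star-shaped, strictly $(2q-1)$-mean convex hypersurfaces, with the elementary superadditivity inequality \eqref{s} handling multiple boundary components. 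So there is no ``paper's proof'' to compare against; your proposal should be judged as a self-standing attempt.

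Judged that way, it is an accurate survey of the flow-based route rather than a proof: the Geroch monotonicity computation, the weak level-set formulation with outward-minimizing hulls, and the substitution of Bray's conformal flow when the Hawking mass fails to be monotone are all correctly described, and you rightly identify the failure of monotonicity for $n>3$ as the fundamental obstruction --- your conclusion that only a dimension- or spin-restricted version is provable matches the actual state of the art. Two concrete soft spots remain even in the cases you claim. First, weak IMCF started from a \emph{disconnected} minimal $\Gamma$ does not deliver the inequality with $A$ the total area: the Huisken--Ilmanen argument yields it only for the largest component, so your endpoint evaluation $m_H(\Gamma)=\frac{1}{2}\left(A/\omega_{n-1}\right)^{(n-2)/(n-1)}$ is legitimate only for connected $\Gamma$, and the disconnected case genuinely requires Bray's conformal flow even when $n=3$. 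Second, the asymptotic endpoint is not the two-sided limit ``$m_H(\Sigma_t)\to \mathrm{m}_{\mathrm{ADM}}$'' you assert, but the one-sided bound $\limsup_{t\to\infty} m_H(\Sigma_t) \leq \mathrm{m}_{\mathrm{ADM}}$, which is what the blowdown analysis of the weak flow actually provides and, fortunately, is the direction the comparison needs. With those corrections your outline faithfully reproduces the known partial results; it is also instructive to note why the paper avoids this machinery entirely --- its divergence-identity argument works in all dimensions $n\geq 3$ but only for graphs with flat normal bundle, and, unlike the flow methods, it surrenders the rigidity statement, as the authors acknowledge in their final remark.
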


The PI was proved by Huisken and Ilmanen \cite{HI} for $n=3$ and $\Gamma$ connected, and by Bray \cite{Bray} for $n=3$ and general $\Gamma$. In \cite{Bray-Lee}, Bray and Lee established the conjecture when $n \leq 7$, with the extra requirement that $M$ be spin for the rigidity statement. The case of Euclidean graphs of codimension one was treated by Lam in \cite{L} (see also \cite{deLG}) and generalized by Mirandola and Vit\'orio for graphs of arbitrary  codimension with flat normal bundle \cite{MV}. The equality case for graphs of codimension one was treated in \cite{HW2}.

In \cite{GWW}, a new mass for asymptotically flat Riemannian manifolds, named Gauss--Bonnet--Chern mass, was introduced. For a positive integer $q \leq n/2$, consider the $q$-th Gauss--Bonnet curvature, denoted $L_{(q)}$, and defined by
\begin{equation}
  \label{eq:gbc curvature}
  L_{(q)} = \frac{1}{2^q}\delta_{b_1b_2 \cdots b_{2q}}^{a_1a_2 \cdots a_{2q}}\left( \prod_{s=1}^{q} R\indices{_{a_{2s-1}a_{2s}}^{b_{2s-1}b_{2s}}} \right) = P_{(q)}^{ijkl}R_{ijkl},
\end{equation}
where $R$ is the Riemann curvature tensor of $(M,g)$ and $P_{(q)}$, which has the same symmetries of the Riemann tensor (see \cite{GWW}, Section 3), is given by
\begin{equation} \label{P}
  P_{(q)}^{ijkl} = \frac{1}{2^q}\delta_{b_1b_2 \cdots b_{2q-3}b_{2q-2}b_{2q-1}b_{2q}}^{a_1a_2 \cdots a_{2q-3}a_{2q-2}ij}
  \left( \prod_{s=1}^{q-1} R\indices{_{a_{2s-1}a_{2s}}^{b_{2s-1}b_{2s}}} \right)g^{b_{2q-1}k}g^{b_{2q}l}.
\end{equation}
\begin{rmk}
{ \rm One can considerably simplify this complicated tensorial expression by rewriting it in the language of double forms, 
 which are a special type of vector valued forms 
(see \cite{La}, for example).}
\end{rmk}

The $q$-th Gauss--Bonnet--Chern mass (GBC mass) of $(M,g)$ is defined by
\begin{equation} \label{GBC mass}
\mathrm{m}_q = c_q(n) \lim_{r \to \infty} \int_{S_r} P_{(q)}^{ijkl}g_{jk,l}\nu_i dS_r,
\end{equation}
where 
\begin{equation} \label{eq:constant GBC mass}
c_q(n) = \frac{(n-2q)!}{2^{q-1}(n-1)!\omega_{n-1}}
\end{equation}
and $S_r$, $dS_r$, $\nu$ and $\omega_{n-1}$ are as in the definition of the ADM mass.

As observed in \cite{GWW}, $\mathrm{m}_1$ coincides with the ADM mass. In the same article, the authors show that, if $\tau > (n-2q)/(q+1)$ and $L_{(q)}$ is integrable, then the limit (\ref{GBC mass}) exists, is finite, and is a geometric invariant. Next, we state versions of the PMC and PI for the GBC mass. We start with the version of the PMC.

\begin{conj} \label{PMT for m_q}
Let $n$ and $q$ be integers such that $n \geq 3$ and $ 1 \leq q < n/2$. If $(M^n,g)$ is an asymptotically flat Riemannian manifold of order $\tau > (n-2q)/(q+1)$ whose $q$-th Gauss--Bonnet curvature $L_{(q)}$ is nonnegative and integrable, then the $q$-th GBC mass of $(M,g)$ is nonnegative. Moreover, if the mass is zero, then $(M,g)$ is isometric to the Euclidean space $(\mathbb{R}^n,\delta)$.
\end{conj}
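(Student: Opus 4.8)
The plan is to prove positivity for a \emph{general} asymptotically flat $(M^n,g)$ — not merely a graph — by transplanting the minimal-hypersurface descent of Schoen and Yau into the Lovelock setting, using the algebraic structure of $P_{(q)}$ as the organizing tool. Two structural facts drive everything. First, $P_{(q)}$ is divergence free, $\nabla_i P_{(q)}^{ijkl}=0$; this is the defining property of the $q$-th Lovelock tensor, forced by the second Bianchi identity together with the total antisymmetrization encoded in the generalized Kronecker symbol $\delta^{a_1\cdots a_{2q}}_{b_1\cdots b_{2q}}$ of \eqref{eq:gbc curvature}. Second, $L_{(q)}$ obeys a Gauss--Codazzi-type reduction: for a hypersurface $\Sigma^{n-1}\subset M^n$ with second fundamental form $h$ and unit normal $\nu$, the restriction of the ambient $L_{(q)}$ splits as the intrinsic $q$-th Gauss--Bonnet curvature of $\Sigma$ plus terms built algebraically from $h$ and the ambient curvature. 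This is the Lovelock analogue of the ordinary Gauss equation that powers the classical scalar-curvature argument.

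First I would assume, for contradiction, that $\mathrm{m}_q<0$ and, exactly as in the scalar case, produce an area-minimizing (hence stable) hypersurface $\Sigma^{n-1}$ whose asymptotic geometry is pinned down by the sign of the mass; existence and regularity of $\Sigma$ are standard geometric measure theory as long as $n-1\le 7$. Next I would feed $\Sigma$ into the stability inequality $\int_\Sigma \big(|\nabla f|^2-(|h|^2+\mathrm{Ric}(\nu,\nu))f^2\big)\ge 0$ and combine it with the Lovelock Gauss equation so as to transfer the ambient condition $L_{(q)}\ge 0$ to a nonnegativity statement for the \emph{intrinsic} $q$-th Gauss--Bonnet curvature of $\Sigma$, while the induced GBC mass of $\Sigma$ inherits a negative sign. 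Iterating on $n$ with $q$ held fixed, the descent terminates at the critical dimension $n=2q+1$, where $\Sigma^{2q}$ carries the \emph{top} Lovelock density: there $\int_\Sigma L_{(q)}$ computes the Euler characteristic by the Gauss--Bonnet--Chern theorem, which supplies a topological obstruction incompatible with the assumed asymptotics. This is precisely the mechanism by which the Gauss--Bonnet theorem on surfaces closes the Schoen--Yau argument in the classical case $q=1$, $n=3$, so the scheme specializes correctly. For rigidity, I would trace the equality case back up the descent: $\mathrm{m}_q=0$ forces each minimizer to be totally geodesic and each induced Gauss--Bonnet curvature to vanish, collapsing the iteration to flatness of $g$.

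The main obstacle — and the reason the conjecture is open in general — is that stability and the higher Lovelock curvature do not cooperate the way they do for scalar curvature. When $q=1$ the Gauss equation $R_\Sigma=R_M-2\,\mathrm{Ric}(\nu,\nu)-|h|^2$ (for minimal $\Sigma$) converts the ambient scalar term into the intrinsic one with a \emph{favourable} sign for the $|h|^2$ contribution, so stability immediately yields positivity of the induced scalar curvature in the integrated sense. For $q\ge 2$ the extrinsic quantities produced by the Lovelock Gauss equation are polynomial in $h$ of degree up to $2q$ and carry no definite sign, so the stability inequality alone does not obviously deliver $L_{(q)}(\Sigma)\ge 0$. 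Removing this sign ambiguity appears to require either a curvature-weighted replacement for the area functional whose stable critical points satisfy a Lovelock-adapted second variation, or an altogether different route. The most promising such route is a Witten-type spinorial argument: one would look for a Weitzenböck--Lichnerowicz identity for a suitably twisted Dirac operator whose zeroth-order curvature term is $L_{(q)}$ and whose boundary integral at infinity reproduces $\mathrm{m}_q$ through $P_{(q)}^{ijkl}g_{jk,l}\nu_i$. Constructing that operator, so that $L_{(q)}\ge 0$ forces $\mathrm{m}_q\ge 0$ directly on spin manifolds, is the crux, and I expect it to be the single hardest step.
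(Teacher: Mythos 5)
You have attempted to prove the full conjecture, but the statement you were given is precisely that --- a conjecture --- and the paper does not prove it in general; its actual theorem (Theorem \ref{main 1}) establishes only the special case of Euclidean graphs with flat normal bundle, and explicitly disclaims the rigidity statement. The paper's mechanism is entirely different from yours and much more elementary: it writes the flux integrand of \eqref{GBC mass} as a vector field $X_{(q)} = P_{(q)}^{ijkl} g_{jk,l}\partial_i$ on the Euclidean base, identifies it as $\tfrac{1}{2}(2q-1)!\,T_{(2q-1)\alpha}\cdot e_\alpha^{\top}$, computes $div_e X_{(q)} = \tfrac{1}{2}L_{(q)} + \tfrac{1}{2}(2q-1)!\langle [T_{(2q-1)\alpha},A_\beta]\cdot e_\alpha^{\top}, e_\beta^{\top}\rangle$ (Proposition \ref{field_div}), and applies the Euclidean divergence theorem; when the normal bundle is flat, the Ricci equation forces $[A_\alpha,A_\beta]=0$, the commutator term dies, and $L_{(q)}\ge 0$ gives $\mathrm{m}_q\ge 0$ directly. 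No minimal hypersurfaces, no stability, no spinors. The only point of contact with your plan is the divergence-free structure of $P_{(q)}$, which the paper exploits as a flux identity over graphs rather than as the engine of a dimensional descent.

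The decisive problem with your proposal is that it is a research program, not a proof: the step on which everything hinges --- converting the stability inequality plus a ``Lovelock Gauss equation'' into nonnegativity of the intrinsic $L_{(q)}$ of the minimizer $\Sigma$ --- is exactly the step you concede fails for $q\ge 2$, because the extrinsic terms are polynomials in $h$ of degree up to $2q$ with no definite sign. A contradiction argument whose key inequality is acknowledged to be unavailable proves nothing. Even granting that step, several further links are unpatched: geometric-measure-theoretic regularity of minimizers caps the descent at ambient dimension $\le 8$, while the conjecture carries no such restriction; the asymptotics of the minimizing hypersurface in Schoen--Yau are controlled by the ADM mass $\mathrm{m}_1$, and $\mathrm{m}_q<0$ gives you no barrier construction when $q\ge 2$ since the decay hypothesis $\tau>(n-2q)/(q+1)$ does not even guarantee $\mathrm{m}_1$ is defined; and the Gauss--Bonnet--Chern endgame at $n=2q+1$ needs a sign on $\int_\Sigma L_{(q)}$ that the descent never delivers. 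The Witten-type twisted Dirac operator with curvature term $L_{(q)}$ that you invoke as the ``most promising route'' is itself an open problem. If your goal was the result the paper actually obtains, the honest route is the graphical one: restrict to graphs, prove the divergence identity, and observe that flat normal bundle kills the commutator --- an argument that is short, sharp, and dimension-free, at the price of generality.
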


Before we state the analogue of the PI, we recall the Riemannian manifold known as the $q$-th Riemannian Schwarzschild \cite[Section~6]{GWW}, which is $(\mathbb{R}\times \mathbb{S}^{n-1}, g^q_{\mathrm{Sch}})$ with
$$
g^q_{\mathrm{Sch}} = \left( 1 + \frac{m}{2r^{\frac{n}{q} - 2}} \right)^{\frac{4q}{n-2q}} \left( dr^2 + r^2 d\theta^2 \right),
$$
where $d\theta^2$ is the round metric on $\mathbb{S}^{n-1}$ and $m \in \mathbb{R}$ is the mass parameter. Let $r_0 = (2m)^{\frac{q}{n-2q}}$. The hypersurface $r = r_0$ is an outermost minimal hypersurface of area $A=\omega_{n-1}r_0^{n-1}$, and the $q$-th GBC mass of $(\mathbb{R}\times \mathbb{S}^{n-1}, g^q_{\mathrm{Sch}})$ is $\mathrm{m}_q = m^q$. Thus, for the $q$-th Riemannian Schwarzschild manifold, one has
$$ \mathrm{m}_q = \frac{1}{2^q} \left( \frac{A}{\omega_{n-1}} \right)^{\frac{n-2q}{n-1}}.$$
We can now state the version of the PI for the GBC mass.

\begin{conj} \label{PI for m_q}
Let $n$ and $q$ be integers such that $n \geq 3$ and $ 1 \leq q < n/2$. If $(M^n,g)$ is an asymptotically flat Riemannian manifold of order $\tau > (n-2q)/(q+1)$ whose $q$-th Gauss--Bonnet curvature $L_{(q)}$ is nonnegative and integrable, and $\Gamma$ is a (possibly disconnected) outermost minimal hypersurface 
of area $A$, then
$$\mathrm{m}_{q} \geq \frac{1}{2^q}\left(\frac{A}{\omega_{n-1}} \right)^{\frac{n-2q}{n-1}},$$
where $\mathrm{m}_q$ is the $q$-th GBC mass.
Moreover, if the equality holds, then $(M,g)$ is isometric to the $q$-th Riemannian Schwarzschild manifold.
\end{conj}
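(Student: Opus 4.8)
The plan is to prove both the inequality and its rigidity by the \emph{inverse mean curvature flow} method, extending the Huisken--Ilmanen argument for the case $q=1$, $n=3$ in \cite{HI} to the $q$-th Gauss--Bonnet setting. The central object is a monotone quantity $\mathrm{m}_q^{H}(\Sigma)$, a \emph{$q$-Hawking mass} attached to a closed hypersurface $\Sigma^{n-1}\subset M$, which reduces to the Geroch quantity when $q=1$. It should combine a power of the area $|\Sigma|$ with an integral over $\Sigma$ of a scalar invariant built from the second fundamental form and the intrinsic curvature of $\Sigma$, designed so that the integrand vanishes on minimal hypersurfaces. Two endpoint normalizations must hold: on an outermost minimal hypersurface the curvature term drops out and one is left with $\mathrm{m}_q^{H}(\Sigma)=\tfrac{1}{2^q}\bigl(|\Sigma|/\omega_{n-1}\bigr)^{(n-2q)/(n-1)}$, while along the coordinate spheres $S_r$ one has $\mathrm{m}_q^{H}(S_r)\to \mathrm{m}_q$ as $r\to\infty$, using the decay \eqref{decay1}--\eqref{decay2}. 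Running the flow outward from $\Gamma$ and combining these with monotonicity then yields the stated inequality.

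In order, the steps are: (i) fix $\mathrm{m}_q^{H}$ by matching the two endpoint values above; (ii) differentiate $\mathrm{m}_q^{H}(\Sigma_t)$ along the smooth flow $\partial_t x = H^{-1}\nu$ and show the derivative is nonnegative --- the essential input is the Gauss--Codazzi system for the Lovelock curvatures, which rewrites the slice-curvature integral as the ambient $L_{(q)}$ restricted to $\Sigma_t$ (entering with a favorable sign, so that $L_{(q)}\ge 0$ is used here) plus the total intrinsic $q$-Gauss--Bonnet integrand of $\Sigma_t$; (iii) pass to a weak level-set / elliptic-regularization formulation as in \cite{HI}, and check that replacing each slice by its outward-minimizing hull preserves $\mathrm{m}_q^{H}$ through singular times; (iv) integrate from $\Gamma$ to infinity to obtain $\mathrm{m}_q\ge \mathrm{m}_q^{H}(\Gamma)=\tfrac{1}{2^q}(A/\omega_{n-1})^{(n-2q)/(n-1)}$.

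For rigidity I would analyze the equality case: equality forces $\tfrac{d}{dt}\mathrm{m}_q^{H}\equiv 0$, which by step (ii) forces every slice $\Sigma_t$ to be totally umbilic of constant mean curvature and forces $L_{(q)}\equiv 0$ along the flow. Umbilicity of the whole foliation makes $g$ a warped product over $\mathbb{S}^{n-1}$; feeding this into $L_{(q)}\equiv 0$ and matching the mass parameter then identifies $(M,g)$ with the $q$-th Riemannian Schwarzschild metric \eqref{q-Sch}.

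The hard part is step (ii), and specifically the intrinsic $q$-Gauss--Bonnet integrand of $\Sigma_t$. Monotonicity in the Geroch mechanism works because, in the critical dimension $\dim\Sigma = n-1 = 2q$ (that is $n=2q+1$, which for $q=1$ is exactly $n=3$), the total intrinsic $q$-Gauss--Bonnet integrand is a topological constant by the Gauss--Bonnet--Chern theorem, so it contributes nothing to $\tfrac{d}{dt}\mathrm{m}_q^{H}$ and the sign is dictated solely by $L_{(q)}\ge 0$. For $n-1>2q$ this topological control is lost and the intrinsic term can carry the wrong sign; this is precisely why the conjecture is open even for $q=1$ once $n>3$. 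I therefore expect this route to settle at most the critical dimension $n=2q+1$, and the general case $n>2q+1$ to require either a locally constrained flow keeping the slices round enough to absorb the intrinsic term, or a Bray--Lee-type conformal flow in the spirit of \cite{Bray-Lee}. The latter looks even harder here, since $L_{(q)}$ has no simple transformation law under conformal changes of metric, so the conformal monotonicity that drives \cite{Bray-Lee} is not available; designing a replacement monotone functional is the genuine new ingredient the general statement seems to demand.
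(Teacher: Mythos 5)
There is a genuine gap --- in fact your proposal is a research program, not a proof, and you concede as much in your final paragraph. First, note what you are actually up against: the statement is posed in the paper as a conjecture (Conjecture~\ref{PI for m_q}), and the paper itself does not prove it in this generality; what it proves is the graphical case with flat normal bundle and star-shaped, strictly $(2q-1)$-mean convex boundary (Theorem~\ref{main 2}), explicitly \emph{without} the rigidity statement. Your flow-based outline does not close the conjecture either, for concrete reasons. (a) The monotone quantity $\mathrm{m}_q^{H}$ is never constructed: matching two endpoint normalizations does not determine a functional, and certainly does not produce monotonicity --- step (i) is vacuous as stated. (b) Step (ii) is precisely where the proof would have to live, and it fails. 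Even in the model case $q=1$, $n=3$, Geroch monotonicity does not follow from ``the intrinsic curvature integral is topological''; it requires the exact first variation of $\int_{\Sigma_t} H^2$ under $\partial_t x = H^{-1}\nu$, the Gauss equation, and the fortunate nonnegativity of the residual terms $|\nabla H|^2/H^2$ and the squared traceless second fundamental form. For $q \geq 2$ the variation of a Lovelock-type boundary integrand produces contractions of Newton tensors with $\nabla^2 H$ and with ambient curvature components of the type $R(\nu,\cdot\,,\nu,\cdot\,)$; pointwise nonnegativity of the full trace $L_{(q)}$ gives no sign control whatsoever on these partial contractions, so $L_{(q)} \geq 0$ does not ``dictate the sign'' even when $n = 2q+1$. (c) Step (iii) imports Huisken--Ilmanen's weak theory, but their passage through jump times depends on the delicate fact that $\int_\Sigma H^2$ does not increase when a slice is replaced by its outward minimizing hull; no analogue is known for integrands built from $H_{(2q-1)}$ or higher curvature powers. (d) The inverse mean curvature flow argument, even for $q=1$, $n=3$, handles only the largest component of $\Gamma$, whereas the conjecture (and the paper's Theorem~\ref{main 2}) allows disconnected $\Gamma$ with total area $A$. (e) The rigidity sketch is likewise incomplete: umbilic CMC slices plus $L_{(q)} \equiv 0$ do not by themselves identify the warping function of \eqref{q-Sch} without an ODE analysis you have not supplied, and in the weak formulation equality does not even yield a smooth foliation to feed into that analysis.

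For contrast, the paper's actual proof of the inequality $\mathrm{m}_q \geq \frac{1}{2^q}(A/\omega_{n-1})^{(n-2q)/(n-1)}$ in the graphical setting is elementary and flow-free: the mass is rewritten via the divergence theorem applied to $X_{(q)} = P_{(q)}^{ijkl}g_{jk,l}\partial_i$ (Propositions~\ref{field_div} and~\ref{prop 3}), the boundary contribution becomes $\frac{1}{2}(2q-1)!\,c_q(n)\int_\Sigma H_{(2q-1)}\,d\Sigma$ in the limit $|Df| \to \infty$, flat normal bundle kills the commutator term so the bulk integral is nonnegative when $L_{(q)} \geq 0$, and the boundary term is estimated from below by the Guan--Li quermassintegral inequality \eqref{guan-li} together with the superadditivity inequality \eqref{s} for several components. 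That route buys exactly the graph case and nothing more --- no rigidity, and star-shapedness plus strict $(2q-1)$-mean convexity of $\Sigma$ are needed to invoke \cite{GL} --- which is why the full conjecture remains open. If you want to pursue your program, the honest target is the critical dimension $n = 2q+1$ with connected $\Gamma$, and even there you owe the explicit first-variation computation of your candidate $q$-Hawking mass and a weak (elliptic-regularized) theory for it; neither exists in the literature you cite.
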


We now turn to the special case of graphs. Let $\Omega$ be a (possibly empty) bounded open subset of $\mathbb{R}^n$ such that $\Sigma = \partial \Omega$ is the union of finitely many smooth connected hypersurfaces. Let $f: \mathbb{R}^n \setminus \Omega \to \mathbb{R}^m$ be a continuous map such that its restriction to $\mathbb{R}^n \setminus \overline{\Omega}$ is smooth. Let $f^{\alpha}$, $1 \leq \alpha \leq m$, be the components of $f$ and let $f^{\alpha}_i$, $f^{\alpha}_{ij}$ and $f^{\alpha}_{ijk}$ denote the first, second and third partial derivatives of $f^{\alpha}$ on $\mathbb{R}^n \setminus \overline{\Omega}$, where $1 \leq i,j,k \leq n$. The map $f$ is said to be asymptotically flat of order $\tau$ if 
\begin{equation} \label{AF function}
|f^{\alpha}_i(x)| + |f^{\alpha}_{ij}(x)| |x| + |f^{\alpha}_{ijk}(x)| |x|^2 = O(|x|^{-\tau/2}),
\end{equation}
for each $\alpha \in \lbrace 1, \ldots, m \rbrace$ and each triple $(i,j,k)$ with $ 1 \leq i, j, k \leq n$.

We assume throughout the paper that $$M = \lbrace (x,f(x)); \ x \in \mathbb{R}^n \setminus \Omega, f(x) \in \mathbb{R}^m \rbrace,$$ the graph of $f$, is a smooth submanifold with (possibly empty) boundary and that $g_{ij} = \delta_{ij} + f^{\alpha}_if^{\alpha}_j$, the metric induced by the Euclidean metric on $\mathbb{R}^{n+m}$, extends to a smooth metric on $M$. Notice that if $f$ is asymptotically flat of order $\tau$, then from (\ref{AF function}) we get that $(M,g)$ is asymptotically flat of order $\tau$.

Conjectures \ref{PMT for m_q} and \ref{PI for m_q} were proved for graphs of codimension one \cite{GWW}. When $q=2$, Li, Wei and Xiong proved these conjectures for graphs of higher codimension with flat normal bundle \cite{LWX}. Conjecture \ref{PMT for m_q} is also known to be true for conformally flat manifolds \cite{GWW2}.

The purpose of the present article is to prove conjectures \ref{PMT for m_q} and \ref{PI for m_q} for a family of higher codimension Euclidean graphs (without the rigidity statements). This family includes the graphs with flat normal bundle. The exposition follows closely the ones given in \cite{L}, \cite{MV}, \cite{GWW} and \cite{LWX}. Before stating our main results, we need to introduce some notation.

Denote by $\lbrace e_i \rbrace_{i=1}^n$ the standard basis of $\mathbb{R}^n$ and by $\lbrace e_{\alpha} \rbrace_{\alpha = 1}^m$ the standard basis of $\mathbb{R}^m$. The coordinate vector fields on $M$ are given by $\partial_i = (e_i, f^{\alpha}_i e_{\alpha})$, and the vector fields $\eta_{\alpha} = (-Df^{\alpha},e_{\alpha})$, where $Df^{\alpha}$ denotes the Euclidean gradient of $f^{\alpha}$, give us a (global) frame field for the normal bundle of $M$. We denote by $B$ the second fundamental form of $M$, by $B_{\alpha}$ its $\alpha$-th component with respect to the frame $\lbrace \eta_{\alpha} \rbrace_{\alpha=1}^{n}$, and by $A_{\alpha}$ the shape operator with respect to $\eta_{\alpha}$. Also, let $U = (U_{\alpha \beta})$ be the metric on the normal bundle induced by the Euclidean metric $\langle \ , \hspace{1pt} \rangle$ on $\mathbb{R}^{n+m}$. The components of $U$ are given by
$$ U_{\alpha \beta} = \delta_{\alpha \beta} + \langle Df^{\alpha}, Df^{\beta} \rangle. $$
The inverse of $U$ is denoted by $(U^{\alpha \beta})$.

Recall the Gauss and the Ricci equations, which are respectively given by
\begin{equation} \label{Gauss}
R_{ijkl} = \langle B_{ik},B_{jl} \rangle - \langle B_{il},B_{jk} \rangle
\end{equation}
and 
\begin{equation} \label{Ricci}
\langle R^{\perp}_{\alpha \beta} (X), Y \rangle = \langle \left[A_{\beta},A_{\alpha}\right](X),Y \rangle,
\end{equation}
where $\langle \ , \hspace{1pt} \rangle$ is the Euclidean metric on $\mathbb{R}^{n+m}$, $R^{\perp}$ is the normal curvature operator and $[A_{\beta},A_{\alpha}] = A_{\alpha} \circ A_{\beta} - A_{\beta} \circ A_{\alpha}.$

We denote by $T_{(2q-1)}$ the Newton tensor of order $(2q-1)$ and denote by $T_{(2q-1)\alpha}$ its $\alpha$-th component with respect to the frame $\lbrace \eta_{\alpha} \rbrace_{\alpha=1}^{n}$ (see 
\cite{G} and  \cite{CL}). The expression for $T_{(2q-1)}$ in coordinates is
\begin{equation} \label{T}
  T\indices{_{(2q-1)}_i^j}= \frac{1}{(2q-1)!}\delta^{a_1\cdots a_{2q-1}j}_{b_1 \cdots b_{2q-1}i}
\langle B_{a_1}^{b_1}, B_{a_2}^{b_2} \rangle \cdots \langle B_{a_{2q-3}}^{b_{2q-3}}, B_{a_{2q-2}}^{b_{2q-2}} \rangle B_{a_{2q-1}}^{b_{2q-1}},
\end{equation}
where $\langle \ , \hspace{1pt} \rangle$ denotes the Euclidean metric on $\mathbb{R}^{n+m}$.
As we will see in Section \ref{sec:1}, if $M$ has flat normal bundle, then $T_{(2q-1)\alpha}$ commutes with $A_{\beta}$, for $1 \leq \alpha, \beta \leq m$.

We can now state the main results of the article. Our first main result is the following:
\begin{theorem} \label{main 1}
Let $n$ and $q$ be integers such that $n \geq 3$ and $ 1 \leq q < n/2$, and let $(M,g)$ be the graph of an asymptotically flat map $f: \mathbb{R}^n \to \mathbb{R}^m$ of order 
$\tau > (n-2q)/(q+1)$. 
If the $q$-th Gauss--Bonnet curvature $L_{(q)}$ of $(M,g)$ is integrable, then the $q$-th Gauss--Bonnet--Chern mass $\mathrm{m}_q$ satisfies
\begin{equation} \label{eq main 1}
  \mathrm{m}_q = \frac{1}{2} c_q(n) \int_M \left(L_{(q)} + (2q-1)! \left\langle \left[ T_{(2q-1)\alpha}, A_{\beta} \right] \cdot e_{\alpha}^{\top}, e_{\beta}^\top \right\rangle \right) \frac{1}{\sqrt{G}}  dM,
\end{equation}
where $c_q(n)$ is the constant \eqref{eq:constant GBC mass}, $G$ is the determinant of $(g_{ij})$, $\left[T_{(2q-1)\alpha},A_{\beta}\right] = T_{(2q-1)\alpha} \circ A_{\beta} - A_{\beta} \circ T_{(2q-1)\alpha}$ is the commutator of the operators $T_{(2q-1)\alpha}$ and $A_{\beta}$, and $e_{\alpha}^{\top}$ is the tangent part (along the graph $M$) of the canonical lift to $\mathbb{R}^{n+m} \equiv \mathbb{R}^n \times \mathbb{R}^m$ of the standard frame field on $\mathbb{R}^m$. 
Moreover, if $M$ has flat normal bundle and $L_{(q)}$ is nonnegative, then $\mathrm{m}_q$ is nonnegative.
\end{theorem}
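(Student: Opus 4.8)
The plan is to follow the graphical strategy of Lam \cite{L} and Mirandola--Vit\'orio \cite{MV}, in the form adapted to the $q$-th Gauss--Bonnet--Chern mass by \cite{GWW} and \cite{LWX}. The idea is to realize the mass \eqref{GBC mass}, which is a flux over the coordinate spheres $S_r$ at infinity, as the boundary term of a divergence identity on the flat base $\mathbb{R}^n$, and then to identify the resulting bulk integrand with $L_{(q)}$ plus a correction coming from the normal curvature.

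First I would rewrite the integrand $P_{(q)}^{ijkl} g_{jk,l} \nu_i$ in terms of the graph data. Since $g_{ij} = \delta_{ij} + f^\alpha_i f^\alpha_j$, one has $g_{jk,l} = f^\alpha_{jl} f^\alpha_k + f^\alpha_j f^\alpha_{kl}$, so the Hessians $f^\alpha_{jl}$, hence the components $B_\alpha$ of the second fundamental form, enter linearly. Using the Gauss equation \eqref{Gauss} to express the Riemann tensor hidden in $P_{(q)}$ through the $B_\alpha$'s, together with the explicit antisymmetrized form \eqref{P}, the goal is to recognize $P_{(q)}^{ijkl} g_{jk,l} \nu_i$, up to the constant $(2q-1)!$, as the Euclidean flux $\langle X, \nu \rangle$ of a vector field $X$ on $\mathbb{R}^n$ whose components are contractions of the Newton tensor \eqref{T} against the gradients $Df^\alpha$.

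I would then apply the divergence theorem on the Euclidean ball $B_r$ and let $r \to \infty$. Because $f$ is defined on all of $\mathbb{R}^n$ there is no inner boundary, and the decay \eqref{AF function} ensures that every integral converges once $L_{(q)}$ is integrable, exactly as in \cite{GWW}. The heart of the matter is to compute $\operatorname{div} X$ in the base. I expect two contributions. The purely tangential part reproduces the pairing of the Newton tensor with the Riemann tensor and, via the Gauss equation, yields $L_{(q)}$; the relation $dM = \sqrt{G}\, dx$ between the induced and Euclidean volume forms is what converts this Euclidean integral into the stated $\int_M(\,\cdots)\,G^{-1/2}\,dM$. Differentiating the Newton tensor produces a second contribution controlled by the Codazzi equation together with the \emph{Ricci equation} \eqref{Ricci}: it is precisely this term, which measures the failure of the shape operators $A_\alpha$ to commute, that assembles into the commutator $\langle [T_{(2q-1)\alpha}, A_\beta]\, e_\alpha^\top, e_\beta^\top\rangle$ of \eqref{eq main 1}.

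The main obstacle is this divergence computation in codimension $m > 1$. In the hypersurface case the Newton tensor is divergence-free, so Codazzi alone suffices and no correction appears; but for $m > 1$ the normal connection is nontrivial and the commutator term is genuinely present. The delicate bookkeeping of the antisymmetrized Kronecker symbol in \eqref{P} and \eqref{T}, the pairings $\langle B_\alpha, B_\beta\rangle$, and the projection of the lifted frame onto $M$ that produces the $e_\alpha^\top$ is where the effort concentrates. Once \eqref{eq main 1} is in hand, the positivity statement is immediate: if the normal bundle is flat then $T_{(2q-1)\alpha}$ commutes with $A_\beta$ (as recorded just before the theorem), so the commutator term vanishes and $\mathrm{m}_q = \tfrac{1}{2} c_q(n) \int_M L_{(q)}\, G^{-1/2}\, dM$; since $c_q(n) > 0$ by \eqref{eq:constant GBC mass}, $G > 0$, and $L_{(q)} \geq 0$ by hypothesis, the integrand is nonnegative and hence $\mathrm{m}_q \geq 0$.
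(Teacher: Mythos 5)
Your proposal is correct and takes essentially the same route as the paper: you introduce the Euclidean vector field $X_{(q)}^i = P_{(q)}^{ijkl}g_{jk,l}$, identify it via the Gauss equation as $\tfrac{1}{2}(2q-1)!\,T_{(2q-1)\alpha}\cdot e_{\alpha}^{\top}$, compute its Euclidean divergence to obtain $\tfrac{1}{2}L_{(q)}$ plus the commutator term (in the paper this is split between the Christoffel correction $div_g - div_e$ and the divergence of the frame components of the Newton tensor), and finish with the divergence theorem and $dV = G^{-1/2}\,dM$. The positivity argument --- flat normal bundle plus the Ricci equation forces $[A_{\alpha},A_{\beta}]=0$, hence $[T_{(2q-1)\alpha},A_{\beta}]=0$ since $T_{(2q-1)\alpha}$ is polynomial in the shape operators --- is likewise the paper's.
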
 

\begin{rmk}
{\rm Notice that, since the graph structure is used in the definition of the vector fields $\eta_\alpha$, the tensor
$ \left[ T_{(2q-1)\alpha}, A_{\beta} \right] $
is defined only for graphs. It is desirable to find an expression similar to (\ref{eq main 1}) that holds for any asymptotically flat submanifold (not necessarily a graph), but we were unable to do it. One strategy in order to do this is to rewrite $ \left[ T_{(2q-1)\alpha}, A_{\beta} \right] $ in such a way that it also makes sense for submanifolds which are not necessarily graphs and try proving that (\ref{eq main 1}) also holds in this case. Another strategy is to find a similar expression for the mass by considering, instead of the vector field (\ref{eq:field}) used to get (\ref{eq main 1}), one which is defined for any submanifold (compare, for example, the vector fields considered in \cite{L} and \cite{deLG}).}
\end{rmk}

Let $\Sigma \subset \mathbb{R}^n$ be an orientable hypersurface and let $\xi$ be a unit normal vector field along $\Sigma$ (chosen to point outwards, whenever this makes sense). The $r$-th mean curvature of $\Sigma$ is defined as the $r$-th elementary symmetric function on the principal curvatures of $\Sigma$. Alternatively, if $K$ is the second fundamental form of $\Sigma \subset \mathbb{R}^n$, then
\begin{equation}
  \label{eq:r-mean-curvature}
  H_r = \frac{1}{r!} \delta^{a_1 \cdots a_r}_{b_1 \cdots b_r} \prod_{s=1}^r K_{a_s}^{b_s}.
\end{equation}

\label{par:estritamenteComvexo} The hypersurface $\Sigma \subset \mathbb{R}^n$ is called \textit{strictly $p$-mean convex}, $1 \leq p \leq n-1$, if $H_r > 0$ for all $1 \leq r \leq p$. Our second main result is the following:


\begin{theorem} \label{main 1.5}
Let $n$ and $q$ be integers such that $n \geq 3$ and $ 1 \leq q < n/2$.
Let $\Omega$ be a bounded and open subset of $\mathbb{R}^n$ such that $\Sigma = \partial \Omega$ is the union of finitely many smooth connected hypersurfaces. Let $f: \mathbb{R}^n \setminus \Omega \to \mathbb{R}^m$ be an  asymptotically flat map of order 
$\tau > (n-2q)/(q+1)$, 
and let $(M,g)$ be the graph of $f$. Assume that $f$ extends smoothly to an open set containing $\mathbb{R}^n \setminus \Omega$ and that $f$ is constant along each connected component of $\Sigma$. 
If the $q$-th Gauss--Bonnet curvature $L_{(q)}$ is integrable, then the $q$-th Gauss--Bonnet--Chern mass $\mathrm{m}_q$ satisfies
\begin{align}
  \mathrm{m}_q = &  \frac{1}{2} c_q(n) \int_M \left(L_{(q)} + (2q-1)! \left\langle \left[ T_{(2q-1)\alpha}, A_{\beta} \right] \cdot e_{\alpha}^{\top}, e_{\beta}^{\top} \right\rangle \right) \frac{1}{\sqrt{G}}  dM \nonumber \\
& + \frac{1}{2} (2q-1)! c_q(n) \int_{\Sigma} \left( \frac{|Df|^2}{1 + |Df|^2} \right)^q H_{(2q-1)} d\Sigma, \nonumber
\end{align}
where 
$$ |Df|^2 = \sum_{\alpha=1}^m |Df^{\alpha}|^2$$
and $H_{(2q-1)}$ is the $(2q-1)$-th mean curvature of $\Sigma$.
\end{theorem}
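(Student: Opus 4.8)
The plan is to run the same argument as in Theorem \ref{main 1}, the only genuinely new ingredient being the contribution of the inner boundary $\Sigma$. Recall that the proof of Theorem \ref{main 1} is based on exhibiting a vector field $W$ on $\mathbb{R}^n\setminus\overline{\Omega}$, built out of the Newton tensor $T_{(2q-1)}$ and the first derivatives of $f$ (schematically $W^i = \mathrm{const}\cdot T_{(2q-1)\alpha}{}^i_j f^\alpha_j$, the free normal index being summed against $f^\alpha_j$), with two properties: its Euclidean divergence equals, as a density against $dx = \frac{1}{\sqrt G}\,dM$,
\[
  \mathrm{div}_{\mathbb{R}^n} W = \tfrac{1}{2}\Big(L_{(q)} + (2q-1)!\,\big\langle [T_{(2q-1)\alpha},A_\beta]\cdot e_\alpha^\top, e_\beta^\top\big\rangle\Big),
\]
and its flux at infinity reproduces the mass, $\mathrm{m}_q = c_q(n)\lim_{r\to\infty}\int_{S_r}\langle W,\nu\rangle\,dS_r$. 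With this in hand I would apply the divergence theorem to $W$ on the region $B_r\setminus\Omega$, whose boundary consists of $S_r$ with outward normal $\nu$ and of $\Sigma$ with outward (from $B_r\setminus\Omega$) normal $-\xi$, where $\xi$ is the outer unit normal of $\Omega$. Multiplying by $c_q(n)$, letting $r\to\infty$, and invoking the integrability of $L_{(q)}$ together with the decay $\tau>(n-2q)/(q+1)$ to control both the volume integral and the flux at infinity, I obtain the formula of Theorem \ref{main 1} augmented by the inner-boundary term $c_q(n)\int_\Sigma\langle W,\xi\rangle\,d\Sigma$. It then remains to show that on $\Sigma$
\[
  \langle W,\xi\rangle = \tfrac{1}{2}(2q-1)!\left(\frac{|Df|^2}{1+|Df|^2}\right)^q H_{(2q-1)}.
\]

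The heart of the proof, and the step I expect to be the main obstacle, is this boundary identity, and it is exactly here that the hypothesis ``$f$ constant on each component of $\Sigma$'' is used. That hypothesis forces each $Df^\alpha$ to be normal to $\Sigma$, so $Df^\alpha=\langle Df^\alpha,\xi\rangle\,\xi$ and $|Df|^2=\sum_\alpha\langle Df^\alpha,\xi\rangle^2$. Two consequences follow. First, in any frame $\{E_1,\dots,E_{n-1},\xi\}$ adapted to $\Sigma$ the induced metric $g_{ij}=\delta_{ij}+f^\alpha_if^\alpha_j$ is block diagonal, equal to the Euclidean metric on $T\Sigma$ and to $1+|Df|^2$ in the $\xi$ direction; in particular $g^{\xi\xi}=(1+|Df|^2)^{-1}$ and $U_{\alpha\beta}=\delta_{\alpha\beta}+\langle Df^\alpha,\xi\rangle\langle Df^\beta,\xi\rangle$ on $\Sigma$. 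Second, differentiating $f^\alpha\equiv\mathrm{const}$ twice along $\Sigma$ yields the key relation $f^\alpha_{ij}\,v^i w^j=-\langle Df^\alpha,\xi\rangle\,K(v,w)$ for $v,w$ tangent to $\Sigma$, where $K$ is the second fundamental form of $\Sigma\subset\mathbb{R}^n$ with respect to $\xi$.

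With these inputs I would compute $\langle W,\xi\rangle$ directly from \eqref{T}. Feeding $f^\alpha_j=\langle Df^\alpha,\xi\rangle\,\xi_j$ and the contraction with $\xi$ into the two free indices of $T_{(2q-1)\alpha}{}^i_j$ places $\xi$ in both the last upper and the last lower slot of the generalized Kronecker delta; by antisymmetry all remaining indices of the $2q-1$ factors of the second fundamental form are then forced to be tangent to $\Sigma$, and the $n$-dimensional delta collapses to the $(n-1)$-dimensional one on $T\Sigma$. On $T\Sigma$ each such factor is $-\langle Df^\alpha,\xi\rangle\,K$ by the Hessian relation, with indices raised by the Euclidean $g^{ab}=\delta^{ab}$. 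The $2q-1$ copies of $K$ then assemble, through this tangential delta and \eqref{eq:r-mean-curvature}, into $(2q-1)!\,H_{(2q-1)}$, while the height derivatives collect into $q$ Euclidean inner products of the form $U^{\alpha\beta}\langle Df^\alpha,\xi\rangle\langle Df^\beta,\xi\rangle$ (one from the contraction with $f^\alpha_j$ and $q-1$ from the inner products $\langle\cdot,\cdot\rangle$ inside $T_{(2q-1)}$). A short computation using $U_{\alpha\beta}=\delta_{\alpha\beta}+\langle Df^\alpha,\xi\rangle\langle Df^\beta,\xi\rangle$ gives $U^{\alpha\beta}\langle Df^\alpha,\xi\rangle\langle Df^\beta,\xi\rangle=|Df|^2/(1+|Df|^2)$, so these $q$ factors produce exactly $(|Df|^2/(1+|Df|^2))^q$. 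Assembling everything yields the claimed value of $\langle W,\xi\rangle$, and hence the theorem. The delicate part is purely the bookkeeping: tracking the universal constant and the factor $\tfrac12$, keeping the $U^{\alpha\beta}$ contractions straight, and checking that no tangential derivative of $f$ survives to spoil the reduction to $T\Sigma$ --- this last point being precisely what the constancy of $f$ on $\Sigma$ guarantees.
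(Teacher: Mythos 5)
Your proposal is correct and is essentially the paper's own proof: the paper applies the divergence theorem on $B_r\setminus\Omega$ to the very same field $X_{(q)}=\tfrac{1}{2}(2q-1)!\,T_{(2q-1)\alpha}\cdot e_{\alpha}^{\top}$, quotes Proposition~\ref{field_div} for the bulk term, and establishes your boundary identity as Proposition~\ref{prop 3} by exactly the reduction you outline (constancy of $f$ on $\Sigma$ makes each $Df^{\alpha}$ normal, the generalized Kronecker delta collapses to tangential indices, the Hessian relation turns the $2q-1$ second-fundamental-form factors into $(2q-1)!\,H_{(2q-1)}$, and the $q$ contractions $U^{\alpha\beta}\langle Df^{\alpha},\xi\rangle\langle Df^{\beta},\xi\rangle=|Df|^2/(1+|Df|^2)$ produce the $q$-th power). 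The only point to watch is a self-compensating sign convention: with your orientation of $\xi$ and your stated Hessian relation $f^{\alpha}_{ij}v^iw^j=-\langle Df^{\alpha},\xi\rangle K(v,w)$ the flux actually comes out as $\langle W,\xi\rangle=-\tfrac{1}{2}(2q-1)!\left(|Df|^2/(1+|Df|^2)\right)^{q}H_{(2q-1)}$ (this is how the paper states Proposition~\ref{prop 3}, pairing it with a minus sign in its divergence-theorem identity), whereas your claimed plus sign corresponds to the opposite convention $f^{\alpha}_{ij}v^iw^j=+\langle Df^{\alpha},\xi\rangle K(v,w)$; since the two signs flip together, either bookkeeping yields the stated formula.
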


Our third main result is the following:

\begin{theorem} \label{main 2}
Let $n$ and $q$ be integers such that $n \geq 3$ and $ 1 \leq q < n/2$.
Let $\Omega$ be a bounded and open subset of $\mathbb{R}^n$ such that $\Sigma = \partial \Omega$ is the union of finitely many smooth hypersurfaces. Let $f: \mathbb{R}^n \setminus \Omega \to \mathbb{R}^m$ be an  asymptotically flat map of order 
$\tau > (n-2q)/(q+1)$, 
and let $(M,g)$ be the graph of $f$. Assume that $f$ is constant along each connected component of $\Sigma$ and that 
$$ | Df | \to \infty \ \ \text{as} \ \ x \to \Sigma.$$
If the $q$-th Gauss-Bonnet curvature $L_{(q)}$ is integrable, then the $q$-th Gauss-Bonnet-Chern mass $\mathrm{m}_q$ satisfies
\begin{align}
  \mathrm{m}_q = &  \frac{1}{2} c_q(n) \int_M \left(L_{(q)} + (2q-1)! \left\langle \left[ T_{(2q-1)\alpha}, A_{\beta} \right] \cdot e_{\alpha}^{\top}, e_{\beta}^{\top} \right\rangle \right) \frac{1}{\sqrt{G}}  dM \nonumber \\
& + \frac{1}{2} (2q-1)! c_q(n) \int_{\Sigma} H_{(2q-1)} d\Sigma, \label{eq main 2}
\end{align}
where $H_{(2q-1)}$ is the $(2q-1)$-th mean curvature of $\Sigma$.
Furthermore, if $M$ has flat normal bundle, $L_{(q)}$ is nonnegative and each component of $\Sigma$ is star-shaped and strictly $(2q-1)$-mean convex, then
\begin{equation} \label{eq_2 main 2}
\mathrm{m}_{q} \geq \frac{1}{2^q}\left(\frac{A}{\omega_{n-1}} \right)^{\frac{n-2q}{n-1}}.
\end{equation}
\end{theorem}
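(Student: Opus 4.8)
The plan is to establish the mass formula of Theorem~\ref{main 2} first, and then to read off the inequality \eqref{eq_2 main 2} from it. For the formula I would mimic the derivation of Theorems~\ref{main 1} and \ref{main 1.5}: starting from the definition \eqref{GBC mass} of $\mathrm{m}_q$ as the limit over the coordinate spheres $S_r$ of the flux of $P_{(q)}^{ijkl}g_{jk,l}\nu_i$, invoke the same divergence identity that underlies \ref{main 1.5}, which rewrites this integrand, once lifted to the graph, as the restriction of a divergence whose bulk density is $\bigl(L_{(q)}+(2q-1)!\langle[T_{(2q-1)\alpha},A_\beta]e_\alpha^\top,e_\beta^\top\rangle\bigr)/\sqrt{G}$. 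Applying the divergence theorem to the part of $M$ lying over $\{1\le |x|\le r\}\setminus\Omega$, bounded outside by the lift of $S_r$ and inside by the lift $\tilde\Sigma$ of $\Sigma$, produces the bulk integral in \eqref{eq main 2} together with a flux over $\tilde\Sigma$. Letting $r\to\infty$, the outer flux reproduces $\mathrm{m}_q$ through \eqref{GBC mass}, and integrability of $L_{(q)}$ together with the decay \eqref{AF function} guarantees that the bulk integral converges to the first line of \eqref{eq main 2}.

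The one point that genuinely differs from Theorem~\ref{main 1.5} is the inner flux over $\tilde\Sigma$. In \ref{main 1.5}, where $f$ extends smoothly, this flux carries the factor $\bigl(|Df|^2/(1+|Df|^2)\bigr)^q$. In the present setting $f$ is only constant on each component of $\Sigma$ and $|Df|\to\infty$, so the graph becomes vertical along $\tilde\Sigma$; I would therefore compute the inner flux as a limit over a family of hypersurfaces $\Sigma_\delta\to\Sigma$ foliating a collar of $\Sigma$, on each of which the boundary integrand of \ref{main 1.5} applies. Since $|Df|\to\infty$, the factor $\bigl(|Df|^2/(1+|Df|^2)\bigr)^q\to 1$, and since $\Sigma$ is a finite union of smooth hypersurfaces the $\Sigma_\delta$ converge to $\Sigma$ together with their $(2q-1)$-th mean curvatures and area elements, so the inner flux tends to $\tfrac12(2q-1)!\,c_q(n)\int_\Sigma H_{(2q-1)}\,d\Sigma$. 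Rearranging then yields the formula. I expect this limiting argument along the vertical boundary -- choosing the collar so that all relevant geometric quantities converge and the factor tends to $1$ uniformly -- to be the main technical obstacle.

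For the inequality, the flat normal bundle hypothesis makes $T_{(2q-1)\alpha}$ commute with $A_\beta$, as recorded in the Introduction, so the commutator term in \eqref{eq main 2} vanishes identically and the bulk integral reduces to $\tfrac12 c_q(n)\int_M L_{(q)}\,\tfrac{1}{\sqrt{G}}\,dM$, which is nonnegative because $L_{(q)}\ge 0$. Hence
\[
\mathrm{m}_q \ \ge\ \tfrac12 (2q-1)!\,c_q(n)\int_\Sigma H_{(2q-1)}\,d\Sigma .
\]
Because $f$ is locally constant on $\Sigma$, the horizon $\tilde\Sigma$ is a graph of a locally constant map over $\Sigma$, so its induced metric is Euclidean and $A=|\tilde\Sigma|_g=|\Sigma|$. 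It remains to bound $\int_\Sigma H_{(2q-1)}\,d\Sigma$ below, and this is exactly the sharp Alexandrov--Fenchel inequality (established for star-shaped, strictly $(2q-1)$-mean convex hypersurfaces of $\mathbb{R}^n$ by Guan and Li), namely
\[
\int_\Sigma H_{(2q-1)}\,d\Sigma \ \ge\ \binom{n-1}{2q-1}\,\omega_{n-1}^{\frac{2q-1}{n-1}}\,|\Sigma|^{\frac{n-2q}{n-1}},
\]
with equality on round spheres, whose hypotheses match precisely the definition of strictly $(2q-1)$-mean convex in \eqref{eq:r-mean-curvature}. Substituting this and using the identity $(2q-1)!\,\binom{n-1}{2q-1}\,c_q(n)=1/(2^{q-1}\omega_{n-1})$, which follows from \eqref{eq:constant GBC mass}, the right-hand side collapses to $\tfrac{1}{2^q}\bigl(|\Sigma|/\omega_{n-1}\bigr)^{\frac{n-2q}{n-1}}=\tfrac{1}{2^q}\bigl(A/\omega_{n-1}\bigr)^{\frac{n-2q}{n-1}}$, which is \eqref{eq_2 main 2}. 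The only external input in this last step is the geometric Alexandrov--Fenchel inequality, which I would cite rather than reprove.
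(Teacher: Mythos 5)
Your strategy for the inequality \eqref{eq_2 main 2} matches the paper's (flat normal bundle kills the commutator via the Ricci equation, $L_{(q)}\ge 0$ makes the bulk term nonnegative, and the Guan--Li quermassintegral inequality handles the boundary term; your constant bookkeeping is correct). But your route to the mass formula \eqref{eq main 2} is genuinely different from the paper's, and it has a gap at exactly the step you flag as the main technical obstacle. The paper does \emph{not} push the divergence theorem up to collar hypersurfaces $\Sigma_\delta$ approaching $\Sigma$; instead, following the last section of \cite{MV}, it fixes the hypersurface $\Sigma$ and approximates the \emph{map}: one chooses smooth maps $F^k:\mathbb{R}^n\setminus\Omega\to\mathbb{R}^m$, each constant on the components of $\Sigma$ and extendable smoothly across $\Sigma$, applies Theorem \ref{main 1.5} to each graph of $F^k$ (so Proposition \ref{prop 3} is used only on $\Sigma$ itself, where it is exact), and lets $k\to\infty$, whence the factor $\left(|DF^k|^2/(1+|DF^k|^2)\right)^q\to 1$. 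Your alternative fails because Proposition \ref{prop 3} is proved only for hypersurfaces along which the full vector-valued map $f$ is constant: that hypothesis is what forces $Df^\alpha=\langle Df^\alpha,\xi\rangle\xi$ and $g^{A1}=0$, which the computation of $\left(T_{(2q-1)\alpha}\right)^1_1$ relies on. In your setting $f$ is constant only on $\Sigma$ itself; the collar hypersurfaces $\Sigma_\delta$ are not level sets of $f$, and for codimension $m\ge 2$ no foliation by common level sets of $f^1,\dots,f^m$ exists at all (generic common level sets have codimension $m$, not $1$). Hence the flux through $\Sigma_\delta$ contains additional terms involving the tangential part of $Df^\alpha$ along $\Sigma_\delta$, and the hypotheses ($|Df|\to\infty$, $f$ locally constant on $\Sigma$) give no control of these terms as $\delta\to 0$: the tangential derivatives can blow up together with the normal ones. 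This is precisely the higher-codimension difficulty that the approximation argument of \cite{MV} was introduced to circumvent; your argument as written only works for $m=1$, where the $\Sigma_\delta$ can be taken to be genuine level sets of the scalar function $f$.

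A second, smaller omission: you apply the Guan--Li inequality to $\Sigma$ as a whole, but Guan--Li applies to a single star-shaped, strictly $(2q-1)$-mean convex hypersurface, and the theorem allows $\Sigma$ to be disconnected (only each \emph{component} is assumed star-shaped and mean convex). The paper applies \eqref{guan-li} to each component $\Sigma_i$ and then combines the resulting bounds using the superadditivity inequality \eqref{s}, namely $\sum_i |\Sigma_i|^{s}\ge\left(\sum_i|\Sigma_i|\right)^{s}$ for $0\le s=\tfrac{n-2q}{n-1}<1$, to obtain \eqref{eq_2 main 2} with $A=|\Sigma|$ the total area. Without this step your argument covers only the case of connected $\Sigma$.
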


\begin{rmk}
{\rm As explained in \cite{GWW} (Remark 5.1), when $\Sigma \subset \mathbb{R}^n$ is stricly mean convex, the condition
\begin{equation}\label{condition}
 |D f| \to \infty \ \ \text{as} \ \ x \to \partial \Omega
 \end{equation}
holds if and only if $\Gamma = \partial M$ is an outermost minimal hypersurface. Therefore, this is a natural assumption.}
\end{rmk}

\begin{rmk}
{\rm Geometrically, condition (\ref{condition}) is equivalent to saying that along each connected component of $\partial M$, the graph $M$ meets orthogonally the hyperplane that contains that component (see \cite{deS}).}
\end{rmk}

\section{Auxiliary results}
\label{sec:1}
Let $n$ and $q$ be positive integers such that $n \geq 3$ and $1 \leq q < n/2$. Throughout this section, the tensors $P_{(q)}$ and $T_{(2q-1)}$ are defined by equations (\ref{P}) and (\ref{T}), respectively. Also, unless stated otherwise, we will follow the notation introduced in Section \ref{intro}.

If $\Omega$ is not empty, we assume, throughout this section, that $f$ extends smoothly to an open set containing $\mathbb{R}^n \setminus \Omega$.

\begin{lemma} Under the notation introduced above, the following identities hold:
\begin{align}
g_{jk,l} = & \ f^{\alpha}_{jl} f^{\alpha}_k + f^{\alpha}_jf^{\alpha}_{kl} \label{a} \\
e_{\alpha}^{\top} = & \ \nabla f^{\alpha} = g^{ij}f_j^{\alpha} \partial_i = U^{\alpha\beta} f_i^\beta \partial_i \label{c} \\
A_{\alpha} \partial_i = & \ f^{\alpha}_{ik}g^{kj}\partial_i \label{shape_operator}\\
B(\partial_i, \partial_j) = & \ f^{\alpha}_{ij}U^{\alpha \beta} \eta^{\beta} \\
(B_{\alpha})_{ij} = & \ f^{\alpha}_{ij} \label{f} \\
\Gamma_{ij}^k = & \ g^{kl}f^{\alpha}_lf^{\alpha}_{ij} \label{g} \\
\nabla e_\alpha^{\top} = & \ \langle B, e_\alpha \rangle \label{h}
\end{align}
\end{lemma}
\begin{proof}
Identities (\ref{a}) - (\ref{g}) are proven in \cite{MV} and \cite{LWX} and identity (\ref{h}) is proved in \cite{palais_critical_1988} (Proposition 4.1.1).
\end{proof}

On an open set that contains $\mathbb{R}^n \setminus \Omega$, consider the vector field $X_{(q)}$ given by
\begin{equation} \label{vector field}
  X_{(q)} = X_{(q)}^i \partial_i = P_{(q)}^{ijkl}g_{jk,l} \partial_i.
\end{equation}

\begin{proposition}
It holds
\begin{equation}
  \label{eq:field}
  X_{(q)} = \frac{1}{2}(2q-1)! \ T_{(2q-1)\alpha} \cdot e_{\alpha}^{\top}.
\end{equation}

\end{proposition}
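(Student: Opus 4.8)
The plan is to verify the identity componentwise: writing $X_{(q)}=X_{(q)}^i\partial_i$ and $e_\alpha^\top=(\nabla f^\alpha)^k\partial_k$, I will compute $X_{(q)}^i=P_{(q)}^{ijkl}g_{jk,l}$ directly from the definition \eqref{P} and match it term by term with the $i$-th component of $\tfrac{1}{2}(2q-1)!\,T_{(2q-1)\alpha}\cdot e_\alpha^\top$, using the Lemma together with the Gauss equation \eqref{Gauss}. The only inputs needed are \eqref{a}, \eqref{c}, \eqref{shape_operator} and \eqref{Gauss}; the rest is bookkeeping with the generalized Kronecker delta.

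First I would substitute $g_{jk,l}=f^\alpha_{jl}f^\alpha_k+f^\alpha_jf^\alpha_{kl}$ from \eqref{a} into $P_{(q)}^{ijkl}g_{jk,l}$, so that the outer metric factors $g^{b_{2q-1}k}g^{b_{2q}l}$ of \eqref{P} distribute over the two summands. The summand $f^\alpha_jf^\alpha_{kl}$ yields the factor $g^{b_{2q-1}k}g^{b_{2q}l}f^\alpha_{kl}$, which is symmetric in $b_{2q-1}$ and $b_{2q}$; since $\delta^{\cdots ij}_{\cdots b_{2q-1}b_{2q}}$ is antisymmetric in those two lower indices, this contribution vanishes, leaving only the summand $f^\alpha_{jl}f^\alpha_k$. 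For the surviving term I would invoke \eqref{c} and \eqref{shape_operator} to read off $g^{b_{2q-1}k}f^\alpha_k=(\nabla f^\alpha)^{b_{2q-1}}=(e_\alpha^\top)^{b_{2q-1}}$ and $g^{b_{2q}l}f^\alpha_{jl}=(A_\alpha)_j^{b_{2q}}$, the shape operator along $\eta_\alpha$.

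Next I would rewrite the $q-1$ curvature factors via the Gauss equation, which after raising indices reads $R\indices{_{a_{2s-1}a_{2s}}^{b_{2s-1}b_{2s}}}=\langle B_{a_{2s-1}}^{b_{2s-1}},B_{a_{2s}}^{b_{2s}}\rangle-\langle B_{a_{2s-1}}^{b_{2s}},B_{a_{2s}}^{b_{2s-1}}\rangle$. Inside the delta, which antisymmetrizes each pair $(a_{2s-1},a_{2s})$ and $(b_{2s-1},b_{2s})$, relabeling $b_{2s-1}\leftrightarrow b_{2s}$ in the second summand shows it equals the first, so each curvature factor may be replaced by $2\langle B_{a_{2s-1}}^{b_{2s-1}},B_{a_{2s}}^{b_{2s}}\rangle$. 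This turns the prefactor $2^{-q}$ into $2^{-q}\cdot2^{q-1}=\tfrac12$ and produces exactly the product of inner products $\langle B,B\rangle$ that appears in the definition \eqref{T} of $T_{(2q-1)}$.

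At this stage $X_{(q)}^i$ has become a delta contracted against $q-1$ factors $\langle B,B\rangle$, one shape operator $A_\alpha$, one gradient $\nabla f^\alpha$ (summed over $\alpha$) and one free index $i$; the $i$-th component of $\tfrac12(2q-1)!\,T_{(2q-1)\alpha}\cdot e_\alpha^\top$ has the identical structure, with the factor $(2q-1)!$ cancelling the $1/(2q-1)!$ of \eqref{T}. The hard part is the final reconciliation of the two index patterns: passing from one to the other interchanges the delta slot occupied by $\nabla f^\alpha$ with the slot occupied by the shape operator, i.e. it transposes the last two upper indices and the last two lower indices of the delta. Tracking these two transpositions gives two sign changes that cancel, so the patterns agree exactly and the identity \eqref{eq:field} follows. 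The entire geometric content resides in the Lemma and the Gauss equation; the main obstacle is purely the careful accounting of the antisymmetries of the generalized Kronecker delta.
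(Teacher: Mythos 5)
Your proposal is correct and follows essentially the same route as the paper's own proof: substitute \eqref{a} and kill the symmetric term by antisymmetry, convert $g^{ck}f^\alpha_k$ and $g^{dl}f^\alpha_{jl}$ into $e_\alpha^\top$ and $A_\alpha$ via \eqref{c} and \eqref{shape_operator}, replace each Riemann factor by $2\langle B,B\rangle$ using the Gauss equation to turn $2^{-q}$ into $\tfrac12$, and finish with the double transposition of delta indices whose signs cancel. No gaps; this is the paper's argument in the same order and with the same key steps.
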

\begin{proof}
By (\ref{vector field}) and (\ref{a}) we have
$$ X_{(q)}^i = P_{(q)}^{ijkl} \left( f^{\alpha}_{jl}f^{\alpha}_k + f^{\alpha}_jf^{\alpha}_{kl} \right).$$
Using the antisymmetry of $P_{(q)}^{ijkl}$ with respect to the indices $k$ and $l$, we have
$$X_{(q)}^i =  P_{(q)}^{ijkl} f^{\alpha}_{jl}f^{\alpha}_k .$$
Combining this identity with \eqref{P}, \eqref{shape_operator} and \eqref{c}, we find
\begin{align*}
X_{(q)}^i = & \ \frac{1}{2^q}\delta_{b_1b_2 \cdots b_{2q-3}b_{2q-2}cd}^{a_1a_2 \cdots a_{2q-3}a_{2q-2}ij}\left( \prod_{s=1}^{q-1} R\indices{_{a_{2s-1}a_{2s}}^{b_{2s-1}b_{2s}}} \right)g^{ck}g^{dl}f^{\alpha}_{jl}f^{\alpha}_k \\
 =  & \ \frac{1}{2^q}\delta_{b_1b_2 \cdots b_{2q-3}b_{2q-2}cd}^{a_1a_2 \cdots a_{2q-3}a_{2q-2}ij}\left( \prod_{s=1}^{q-1} R\indices{_{a_{2s-1}a_{2s}}^{b_{2s-1}b_{2s}}} \right)g^{dl}f^{\alpha}_{jl}g^{ck}f^{\alpha}_k \\
= & \ \frac{1}{2^q}\delta_{b_1b_2 \cdots b_{2q-3}b_{2q-2}cd}^{a_1a_2 \cdots a_{2q-3}a_{2q-2}ij}\left( \prod_{s=1}^{q-1} R\indices{_{a_{2s-1}a_{2s}}^{b_{2s-1}b_{2s}}} \right)\left(A_{\alpha}\right)_j^dg^{ck}f^{\alpha}_k \\
= & \ \frac{1}{2^q}\delta_{b_1b_2 \cdots b_{2q-3}b_{2q-2}cd}^{a_1a_2 \cdots a_{2q-3}a_{2q-2}ij}\left( \prod_{s=1}^{q-1} R\indices{_{a_{2s-1}a_{2s}}^{b_{2s-1}b_{2s}}} \right)\left(A_{\alpha}\right)_j^d\left( \nabla f^{\alpha} \right)^c. 
\end{align*}
Hence, using \eqref{Gauss}, \eqref{T}, \eqref{c} and switching $i$ with $j$ and $c$ with $d$, we find
\begin{align*}
X_{(q)}^i = & \ \frac{2^{q-1}}{2^q}\delta_{b_1b_2 \cdots b_{2q-3}b_{2q-2}cd}^{a_1a_2 \cdots a_{2q-3}a_{2q-2}ij}\left( \prod_{s=1}^{q-1} \langle B^{b_{2s-1}}_{a_{2s-1}} , B^{b_{2s}}_{a_{2s}} \rangle \right)\left(A_{\alpha}\right)_j^d\left( \nabla f^{\alpha} \right)^c \\
= & \ \frac{1}{2}\delta_{b_1b_2 \cdots b_{2q-3}b_{2q-2}dc}^{a_1a_2 \cdots a_{2q-3}a_{2q-2}ji}\left( \prod_{s=1}^{q-1} \langle B^{b_{2s-1}}_{a_{2s-1}} , B^{b_{2s}}_{a_{2s}} \rangle \right)\left(A_{\alpha}\right)_j^d\left( \nabla f^{\alpha} \right)^c \\
= & \ \frac{1}{2}(2q-1)!\left( T_{(2q-1)\alpha} \right)^i_c \left( \nabla f^{\alpha} \right)^c \\
= & \ \frac{1}{2}(2q-1)!\left( T_{(2q-1)\alpha} \cdot \nabla f^{\alpha} \right)^i \\
  = & \ \frac{1}{2}(2q-1)!\left( T_{(2q-1)\alpha} \cdot e_{\alpha}^{\top} \right)^i.
\end{align*}
\end{proof}
The next identity is a higher codimensional version of Proposition 3.5 (b) in \cite{R2} (see also \cite{AdeLM}, Section 8).
\begin{proposition} \label{field_div}
It holds
$$
\mathrm{div}_e X =
\frac{1}{2}L_{(q)} + \frac{1}{2}(2q-1)!  \left\langle \left[ T_{(2q-1)\alpha}, A_{\beta} \right] \cdot e_{\alpha}^{\top}, e_{\beta}^{\top} \right\rangle,
$$
where \( \mathrm{div}_e \) denotes the Euclidean divergence.
\end{proposition}
\begin{proof}
  Using the identity
  \begin{equation*}
    \mathrm{div}_e X_{(q)} = \partial_i X\indices*{_{(q)}^i}
  \end{equation*}
  and the identities \eqref{c}, \eqref{f} and \eqref{g}, we have
  \begin{align*}
    \mathrm{div}_g X_{(q)} &= \nabla_i X_{(q)}^i = \partial_i X\indices*{_{(q)}^i} + \Gamma_{ij}^i X\indices*{_{(q)}^j} \\
                         &= \mathrm{div}_e X_{(q)} + (e_{\beta}^{\top})^i (B_{\beta})_{ij} X\indices*{_{(q)}^j} \\
                         &= \mathrm{div}_e X_{(q)} + \left\langle A_{\beta} \cdot X_{(q)}, e_{\beta}^{\top} \right\rangle \\
                         &= \mathrm{div}_e X_{(q)} + {\frac{1}{2}}(2q-1)! \left\langle \left( A_{\beta} \circ T_{(2q-1)\alpha} \right) \cdot e_{\alpha}^{\top}, e_{\beta}^{\top} \right\rangle.
  \end{align*}
  By the expression for the vector field $X_{(q)}$ established in the previous proposition, it follows that
  \begin{align*}
    \mathrm{div}_g X_{(q)} &= \frac{1}{2} (2q-1)! \ \mathrm{div}_g \left( T_{(2q-1)\beta} \cdot e_{\beta}^{\top} \right) \\
                         &= \frac{1}{2} (2q-1)! \big[ \mathrm{div}_g \left( T_{(2q-1)\beta}  \right) \cdot e_{\beta}^{\top}
                           +  T_{(2q-1)\beta} \cdot \nabla e_{\beta}^{\top} \big]. \label{eq:campo-divergencia-variedade-passo1}
  \end{align*}
By \eqref{c} and (\ref{h}), the identities
  \begin{equation*}
    \nabla e_{\beta}^{\top} 
    = \langle B, e_{\beta}  \rangle = U^{\gamma\alpha} \langle \eta_{\gamma}, e_{\beta} \rangle B_{\alpha} = U^{\beta\alpha} B_{\alpha}
  \end{equation*}
  hold. 
  Therefore, the Gauss equation together with identities \eqref{T} and \eqref{eq:gbc curvature} give
  $$ 
    T_{(2q-1)\beta} \cdot \nabla e_{\beta}^{\top} = U^{\beta\alpha} T_{(2q-1)\beta} \cdot B_\alpha = \frac{1}{(2q-1)!} L_{(q)}.
  $$
  Thus,
  \begin{equation*}
    \mathrm{div}_e X_{(q)} = {\frac{1}{2}} L_{(q)} + \frac{1}{2} (2q-1)! \left[ \mathrm{div}_g \left( T_{(2q-1)\beta}  \right) \cdot e_{\beta}^{\top} - \left\langle \left( A_{\beta} \circ T_{(2q-1)\alpha} \right) \cdot e_{\alpha}^{\top}, e_{\beta}^{\top} \right\rangle \right].
  \end{equation*}
  Recall that the Newton tensors of a submanifold of Euclidean space are divergence free (see, for example, lemmata 3.1 and 3.2 of \cite{CL}) and that each of the fields in the normal frame is given by the expression $\eta_{\beta} = (-Df^{\beta},e_{\beta})$. Hence, using identity \eqref{c}, we have
  \begin{align*}
    \left( \mathrm{div}_g T_{(2q-1)\beta} \right)_j &= \nabla_i \left( T_{(2q-1)\beta} \right)\indices{^i_j}
                                                     = \nabla_i \left\langle \left( T_{(2q-1)} \right)\indices{^i_j}, \eta_{\beta} \right\rangle \\
                                                   &= \left\langle \nabla^\perp_i \left( T_{(2q-1)} \right)\indices{^i_j}, \eta_{\beta} \right\rangle
                                                     + \left\langle \left( T_{(2q-1)} \right)\indices{^i_j}, \nabla^\perp_i \eta_{\beta} \right\rangle \\
                                                   &= \left\langle \left( \mathrm{div} \ T_{(2q-1)} \right)_j, \eta_{\beta} \right\rangle
                                                     + U^{\gamma\alpha} \left( T_{(2q-1)\alpha} \right)\indices{^i_j}
                                                     \left\langle \eta_{\gamma}, {\bar D}_i \eta_{\beta} \right\rangle \\
                                                   &= \left( T_{(2q-1)\alpha} \right)\indices{^i_j}
                                                     U^{\gamma\alpha} f_k^{\gamma} f_{ik}^{\beta}
                                                   = \left( T_{(2q-1)\alpha} \right)\indices{^i_j}
                                                     \left( e_{\alpha}^{\top} \right)^k (B_{\beta})_{ik} \\
                                                   &= \left( \left( T_{(2q-1)\alpha} \circ A_{\beta} \right) \cdot e_{\alpha}^{\top} \right)_j,
  \end{align*}
  where $\bar{D}$ is the Levi-Civita connection of the ambient space $\mathbb{R}^{n+m} \equiv \mathbb{R}^n \times \mathbb{R}^m$. Therefore,
  \begin{align*}
    \mathrm{div}_e X_{(q)} &= {\frac{1}{2}} L_{(q)} + \frac{1}{2} (2q-1)! \left\langle  \left( T_{(2q-1)\alpha} \circ A_{\beta} - A_{\beta} \circ T_{(2q-1)\alpha} \right) \cdot e_{\alpha}^{\top}, e_{\beta}^{\top} \right\rangle \\
                         &= {\frac{1}{2}} L_{(q)} + \frac{1}{2} (2q-1)! \left\langle  \left[ T_{(2q-1)\alpha}, A_{\beta} \right] \cdot e_{\alpha}^{\top}, e_{\beta}^{\top} \right\rangle.
  \end{align*}
\end{proof}

\begin{proposition} \label{prop 3}
  For a level set $\Sigma \subset \mathbb{R}^n$ in the domain of a euclidean graph, the identity
$$ \langle X_{(q)}, \xi \rangle
= - \frac{1}{2}(2q-1)! \left( \frac{|Df|^2}{1 + |Df|^2} \right)^{q}H_{(2q-1)}$$
holds, where $\xi$ denotes a unit normal vector field along $\Sigma$ (chosen to point outwards, whenever this makes sense).
\end{proposition}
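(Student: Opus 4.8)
The plan is to start from the field expression $X_{(q)} = \frac{1}{2}(2q-1)!\,T_{(2q-1)\alpha}\cdot e_\alpha^\top$ established in the preceding proposition, so that the desired quantity becomes $\langle X_{(q)},\xi\rangle = \frac{1}{2}(2q-1)!\,\langle T_{(2q-1)\alpha}\cdot e_\alpha^\top,\xi\rangle$, with the pairing taken in $\mathbb{R}^{n+m}$ and $\xi$ regarded as a vector in $\mathbb{R}^n\times\{0\}$. The whole computation should be carried out pointwise on $\Sigma$ in a frame adapted to the level set. Since $f$ is constant along $\Sigma$, each Euclidean gradient $Df^\gamma$ is normal to $\Sigma$, so $Df^\gamma = c^\gamma\xi$ with $c^\gamma=\langle Df^\gamma,\xi\rangle$; in particular $f_i^\gamma = c^\gamma\xi_i$ and $|Df|^2=\sum_\gamma(c^\gamma)^2=:c^2$. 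I would fix a frame $v_1,\dots,v_{n-1},v_n=\xi$, orthonormal for the Euclidean metric on $\mathbb{R}^n$, with $v_1,\dots,v_{n-1}$ principal directions of $\Sigma$, so that the second fundamental form $K$ of $\Sigma\subset\mathbb{R}^n$ is diagonal with principal curvatures $\kappa_1,\dots,\kappa_{n-1}$.

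In this frame the induced metric is $g=\mathrm{diag}(1,\dots,1,1+c^2)$, because $f_a^\gamma=0$ for $a\le n-1$ and $f_\xi^\gamma=c^\gamma$. Two consequences cut the problem down. First, $e_\gamma^\top=g^{ij}f_j^\gamma\partial_i=\frac{c^\gamma}{1+c^2}\,\partial_\xi$, so $e_\gamma^\top$ points only in the $\xi$-direction. Second, $\langle\partial_i,\xi\rangle=\delta_i^\xi$, because $\partial_i=(v_i,f_i^\beta e_\beta)$ pairs with $\xi=(v_n,0)$ only through its $\mathbb{R}^n$ part. Hence both free indices of $T_{(2q-1)\alpha}$ are forced to be $\xi$, and the identity reduces to $\langle X_{(q)},\xi\rangle=\frac{1}{2}(2q-1)!\sum_\alpha\frac{c^\alpha}{1+c^2}\,(T_{(2q-1)\alpha})^\xi_\xi$.

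Next I would evaluate $(T_{(2q-1)\alpha})^\xi_\xi$. The generalized Kronecker delta $\delta^{a_1\cdots a_{2q-1}\xi}_{b_1\cdots b_{2q-1}\xi}$ forces all remaining indices to be tangential and collapses to the $(n-1)$-dimensional delta $\delta^{a_1\cdots a_{2q-1}}_{b_1\cdots b_{2q-1}}$, so only the tangential block of the shape operators is needed. Using the standard fact that the Euclidean Hessian of a function constant on $\Sigma$ equals minus its normal derivative times $K$, one gets $f_{ab}^\gamma=-c^\gamma\kappa_a\delta_{ab}$ for $a,b\le n-1$ (with the sign convention under which mean-convex hypersurfaces have positive higher mean curvatures); since $g$ is tangentially the identity, raising the index is trivial and $(A_\gamma)_a^b=f_{ab}^\gamma=-c^\gamma\kappa_a\delta_{ab}$. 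Note that all the $A_\gamma$ are \emph{simultaneously} diagonalized, because every Hessian is proportional to the single tensor $K$.

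The step I expect to be the crux is handling the inner products $\langle B_{a}^{b},B_{c}^{d}\rangle$ correctly. These are Euclidean inner products of the normal-bundle-valued second fundamental form, and expanding $B_{ij}=f_{ij}^\gamma U^{\gamma\delta}\eta_\delta$ yields $\langle B_a^b,B_c^d\rangle=(A_\gamma)_a^b\,U^{\gamma\gamma'}(A_{\gamma'})_c^d$; the normal-bundle metric $U^{\gamma\gamma'}$ genuinely appears and must not be replaced by $\delta^{\gamma\gamma'}$. A direct Sherman--Morrison computation from $U_{\gamma\gamma'}=\delta_{\gamma\gamma'}+c^\gamma c^{\gamma'}$ gives $c^\gamma U^{\gamma\gamma'}c^{\gamma'}=\frac{c^2}{1+c^2}$, so each of the $q-1$ inner-product factors contributes $\frac{c^2}{1+c^2}\kappa_a\kappa_c$ rather than $c^2\kappa_a\kappa_c$; it is precisely this that upgrades the single power $\frac{c^2}{1+c^2}$ into the $q$-th power in the final answer. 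Assembling everything, the diagonal structure forces $b_i=a_i$, the sum over distinct tangential tuples produces $(2q-1)!\,\sigma_{2q-1}(\kappa)$, which cancels the prefactor $1/(2q-1)!$ and leaves $H_{(2q-1)}$, so that $(T_{(2q-1)\alpha})^\xi_\xi=-c^\alpha\bigl(\tfrac{c^2}{1+c^2}\bigr)^{q-1}H_{(2q-1)}$. Substituting and using $\sum_\alpha (c^\alpha)^2=c^2=|Df|^2$ contributes the remaining factor $\frac{c^2}{1+c^2}$ and yields $\langle X_{(q)},\xi\rangle=-\frac{1}{2}(2q-1)!\bigl(\frac{|Df|^2}{1+|Df|^2}\bigr)^q H_{(2q-1)}$, as claimed.
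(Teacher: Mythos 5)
Your proof is correct, and its overall skeleton coincides with the paper's: both arguments reduce the computation, in an orthonormal frame at a point of $\Sigma$ with $\xi$ as one axis, to the single component $(T_{(2q-1)\alpha})^{\xi}_{\xi}$ via $e_\alpha^\top = \frac{\langle Df^\alpha,\xi\rangle}{1+|Df|^2}\,\partial_\xi$, collapse the generalized Kronecker delta to tangential indices, use the tangential relation $A_\alpha = -\langle Df^\alpha,\xi\rangle K$, and finish with $|Df|^2=\sum_\alpha\langle Df^\alpha,\xi\rangle^2$. The genuine difference is in how the factor $\bigl(\tfrac{|Df|^2}{1+|Df|^2}\bigr)^{q-1}$ is produced. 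The paper rewrites $T_{(2q-1)\alpha}$ in terms of the Riemann tensor of the graph $M$ and then quotes equations (4.3)--(4.4) of Li--Wei--Xiong, namely $R_{AB}{}^{CD}=\tfrac{|Df|^2}{1+|Df|^2}\hat R_{AB}{}^{CD}$, combined with the Gauss equation of $\Sigma\subset\mathbb{R}^n$. You instead stay at the level of the normal-bundle-valued second fundamental form: expanding $\langle B_a^b,B_c^d\rangle=(A_\gamma)_a^b\,U^{\gamma\gamma'}(A_{\gamma'})_c^d$, diagonalizing everything with principal directions (legitimate here, since every Hessian is proportional to the single tensor $K$ on a level set), and computing $c^\gamma U^{\gamma\gamma'}c^{\gamma'}=\tfrac{|Df|^2}{1+|Df|^2}$ by Sherman--Morrison inversion of $U=I+cc^{T}$ --- in effect re-deriving the LWX identity in this setting rather than citing it. Your route buys self-containedness and makes transparent where each of the $q$ powers of $\tfrac{|Df|^2}{1+|Df|^2}$ originates (one per inner-product factor through $U^{-1}$, one from the final contraction), and your two flagged danger points --- that $U^{\gamma\gamma'}$ must not be replaced by $\delta^{\gamma\gamma'}$, and the sign in $f^\gamma_{ab}=-c^\gamma\kappa_a\delta_{ab}$, which matches the paper's $(A_\alpha)_A^B=-\langle Df^\alpha,\xi\rangle K_A^B$ obtained from $\eta_\alpha=(-Df^\alpha,e_\alpha)$ --- are exactly the spots where an error would otherwise enter. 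The paper's route, for its part, avoids diagonalization and the explicit inversion of the normal metric by leaning on known curvature relations, at the cost of an external citation.
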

\begin{proof}
We have
$$ \langle X_{(q)}, \xi \rangle = \frac{1}{2}(2q-1)! \left( T_{(2q-1)\alpha} \cdot e_{\alpha}^{\top} \right)^i \xi_i.$$
Let $x \in \Sigma$. Rotate the coordinates such that, at $x$, $e_1 = \xi$ and $\lbrace e_A \rbrace_{A=2}^n$ is an orthonormal frame for the tangent space of $\Sigma$ at $x$. With respect to this new frame $\lbrace e_i \rbrace_{i = 1}^n$ on $\mathbb{R}^n$,
$$ \xi_i = \delta_i^1,$$
for $i= 1, \ldots, n$.
Thus,
$$\langle X_{(q)}, \xi \rangle = \frac{1}{2}(2q-1)! \left( T_{(2q-1)\alpha} \cdot e_{\alpha}^{\top} \right)^1.$$
As in section 4 of \cite{LWX}, we find
that the inverse of $g$ is given by
$$ g^{11} = \frac{1}{1+|Df|^2}, \ \ g^{A1} = 0, \textrm{ and } g^{AB} = \delta^{AB}.$$
It follows that
$$ e_{\alpha}^{\top} = \frac{f_1^{\alpha}}{1 + |Df|^2} \partial_1 = \frac{\langle Df^{\alpha}, \xi \rangle}{1+ |Df|^2}\partial_1.$$
Therefore,
$$ \langle X_{(q)}, \xi \rangle = \frac{1}{2}(2q-1)!\frac{\langle Df^{\alpha}, \xi \rangle}{1+ |Df|^2} \left( T_{(2q-1)\alpha} \right)^1_1.$$
Since
$$ \left( T_{(2q-1)\alpha} \right)^1_1 = \frac{1}{2^{q-1}} \frac{1}{(2q-1)!}
\delta^{a_1 \cdots a_{2q-1}1}_{b_1 \cdots b_{2q-1}1} \left( \prod_{s=1}^{q-1} R\indices{_{a_{2s-1}a_{2s}}^{b_{2s-1}b_{2s}}}\right) \left( A_{\alpha} \right)^{b_{2q-1}}_{a_{2q-1}},$$
using the antisymmetry of $\delta^{a_1 \cdots a_{2q-1}1}_{b_1 \cdots b_{2q-1}1}$ we find that
$$\left( T_{(2q-1)\alpha} \right)^1_1 = \frac{1}{2^{q-1}} \frac{1}{(2q-1)!}
\delta^{A_1 \cdots A_{2q-1}1}_{B_1 \cdots B_{2q-1}1} \left( \prod_{s=1}^{q-1} R\indices{_{A_{2s-1}A_{2s}}^{B_{2s-1}B_{2s}}} \right) \left( A_{\alpha} \right)^{B_{2q-1}}_{A_{2q-1}}.$$
Recall that the generalized Kronecker delta is a determinant. Using the $q$-th column to expand it, we find
$$ \delta^{A_1 \cdots A_{2q-1}1}_{B_1 \cdots B_{2q-1}1} = \delta^{A_1 \cdots A_{2q-1}}_{B_1 \cdots B_{2q-1}}.$$
Hence,
$$\left( T_{(2q-1)\alpha} \right)^1_1 = \frac{1}{2^{q-1}} \frac{1}{(2q-1)!}
\delta^{A_1 \cdots A_{2q-1}}_{B_1 \cdots B_{2q-1}} \left( \prod_{s=1}^{q-1} R\indices{_{A_{2s-1}A_{2s}}^{B_{2s-1}B_{2s}}} \right) \left( A_{\alpha} \right)^{B_{2q-1}}_{A_{2q-1}}.$$

Let $\hat{R}$ denote the Riemann curvature tensor of $\Sigma$, and denote by $K$ and $\tilde{K}$, respectively, the second fundamental form of $\Sigma$ as a hypersurface of $\mathbb{R}^n$ and the second fundamental form of $f(\Sigma)$ as a hypersurface of $(M,g)$. By equations (4.3) and (4.4) of \cite{LWX}, we have
$$
 \tilde{K} = \frac{K}{\sqrt{1 + |Df|^2}}.
$$
and
$$
R_{AB}^{\ \ \ \ CD} = \frac{|Df|^2}{1 + |Df|^2}\hat{R}_{AB}^{\ \ \ \ CD}.
$$
Thus, plugging into the expression for $\left( T_{(2q-1)\alpha} \right)^1_1$, we find
\begin{align*}
\left( T_{(2q-1)\alpha} \right)^1_1 
  = \ & \frac{1}{2^{q-1}}\frac{1}{(2q-1)!} \left( \frac{|Df|^2}{1 + |Df|^2} \right)^{q-1} \\
      & \times \delta^{A_1 \cdots A_{2q-1}}_{B_1 \cdots B_{2q-1}}
        \left( \prod_{s=1}^{q-1} \hat{R}\indices{_{A_{2s-1}A_{2s}}^{B_{2s-1}B_{2s}}} \right)
        \left(A_{\alpha}\right)_{A_{2q-1}}^{B_{2q-1}}.
\end{align*}
From
$$\eta^{\alpha} = e_{\alpha} - Df^{\alpha} = e_{\alpha} - \langle Df^{\alpha}, \xi \rangle \xi,$$
it follows that
$$ \left( A_{\alpha} \right)_A^B = - \langle Df^{\alpha}, \xi \rangle K_A^B.$$
Also, the Gauss equation applied to $\Sigma \subset \mathbb{R}^n$ yields
$$ \hat{R}_{AB}^{\ \ \ \ CD} = K_A^C K_B^D - K_A^D K_B^C.$$
We then conclude that
\begin{align*}
\left( T_{(2q-1)\alpha} \right)^1_1 
  = & -\frac{1}{2^{q-1}}\frac{1}{(2q-1)!} \langle Df^{\alpha}, \xi \rangle
      \left( \frac{|Df|^2}{1 + |Df|^2} \right)^{q-1} \\
    & \times \delta^{A_1 \cdots A_{2q-1}}_{B_1 \cdots B_{2q-1}}
    \left( \prod_{s=1}^{q-1} \hat{R}\indices{_{A_{2s-1}A_{2s}}^{B_{2s-1}B_{2s}}} \right)
    \left(K\right)_{A_{2q-1}}^{B_{2q-1}} \\
  = & -\frac{1}{(2q-1)!} \langle Df^{\alpha}, \xi \rangle
      \left( \frac{|Df|^2}{1 + |Df|^2} \right)^{q-1} \\
    & \times \delta^{A_1 \cdots A_{2q-1}}_{B_1 \cdots B_{2q-1}}
      \left( \prod_{s=1}^{q-1} K_{A_{2s-1}}^{B_{2s-1}}K_{A_{2s}}^{B_{2s}} \right) K_{A_{2q-1}}^{B_{2q-1}} \\
  = & \  -\langle Df^{\alpha}, \xi \rangle \left( \frac{|Df|^2}{1 + |Df|^2} \right)^{q-1} H_{(2q-1)},
\end{align*}
where we have used the expression \eqref{eq:r-mean-curvature} to obtain the last equality.
It follows that
\begin{align*}
\langle X_{(q)}, \xi \rangle
  & = \frac{1}{2}(2q-1)! \frac{\langle Df^{\alpha}, \xi \rangle}{1 + |Df|^2} \left( T_{(2q-1)\alpha} \right)^1_1  \\
  & = - \frac{1}{2}(2q-1)!\frac{\langle Df^{\alpha}, \xi \rangle^2}{1 + |Df|^2}
    \left( \frac{|Df|^2}{1 + |Df|^2} \right)^{q-1}H_{(2q-1)} \\
  & = - \frac{1}{2}(2q-1)!\left( \frac{|Df|^2}{1 + |Df|^2} \right)^{q}H_{(2q-1)},
\end{align*}
where, to obtain the last equality, we have used that
$$ Df^{\alpha} = \langle Df^{\alpha}, \xi \rangle \xi$$
implies
$$ |Df|^2 = \sum_{\alpha} \langle Df^{\alpha}, \xi \rangle^2.$$
\end{proof}

\begin{rmk}
{\rm In Proposition \ref{prop 3}, the expression $|Df|^2 / \left( 1 + |Df|^2 \right)$
is the cosine of the angle between the graph and the hyperplane containing its boundary (see \cite{deS}).
}

\end{rmk}

\section{Proof of the theorems }

Suppose first that $M$ has no boundary. Let $S_r$ be an Euclidean coordinate sphere of radius $r$. By (\ref{GBC mass}), (\ref{vector field}) and the divergence theorem we have
\begin{align*}
\mathrm{m}_q 
= & \ c_q (n) \lim_{r \to \infty} \int_{S_r} X_{(q)}^i\xi_i dS_r \\
= & \ c_q(n)  \int _{\mathbb{R}^n} \mathrm{div}_e X_{(q)} dV,
\end{align*}
where $dV$ denotes the Euclidean volume form. Thus, invoking Proposition \ref{field_div} and using that
\begin{equation} \label{dV}
dV = \frac{1}{\sqrt{G}} dM,
\end{equation}
we find
$$ \mathrm{m}_q = \frac{1}{2} c_q(n) \int_M \left(L_{(q)} + (2q-1)! \langle \left[ T_{(2q-1)\alpha}, A_{\beta} \right] \cdot e_{\alpha}^{\top}, e_{\beta}^{\top} \rangle \right) \frac{1}{\sqrt{G}}  dM,$$
which is exactly the first part of Theorem \ref{main 1}.

To prove the second part of Theorem \ref{main 1}, notice that, from equations 3 and 6 of \cite{AKN}, the tensor $T_{(2q-1)\alpha}$ can be written as a polynomial on the $A_{\alpha}$'s.
Also, if $M$ has flat normal bundle, then the Ricci equation (\ref{Ricci}) yields
\begin{equation} \label{comutation}
\left[ A_{\alpha}, A_{\beta} \right] = 0,
\end{equation} 
for all $\alpha, \beta \in \lbrace 1, \ldots, m \rbrace$.
Thus, using (\ref{comutation}) several times, we find
$$\left[ T_{(2q-1)\alpha}, A_{\beta} \right] = 0,$$
for all $\alpha, \beta \in \lbrace 1, \ldots, m \rbrace$. Hence, equation (\ref{eq main 1}) becomes
$$ \mathrm{m}_q = \frac{1}{2} c_q(n) \int_M L_{(q)}  \frac{1}{\sqrt{G}}  dM.$$
Therefore, if $L_{(q)}$ is nonnegative, then $\mathrm{m}_q$ is nonnegative. This finishes the proof of Theorem \ref{main 1}.

Suppose now that $\partial M$ is not empty and that $f$ can be extended to a smooth map on some open set containing $\mathbb{R}^n \setminus \Omega$. This assumption allows us to use the results of Section \ref{sec:1}. Equations (\ref{GBC mass}), (\ref{vector field}) and the divergence theorem yield
\begin{align*}
\mathrm{m}_q 
= & \ c_q (n) \lim_{r \to \infty} \int_{S_r} X_{(q)}^i\nu_i dS_r   \\
= & \ c_q(n)  \int _{\mathbb{R}^n \setminus \Omega} \mathrm{div}_e X_{(q)} dV - c_q(n)\int_{\Sigma} \langle X_{(q)}, \xi \rangle d\Sigma.
\end{align*}
Invoking Proposition \ref{field_div}, Proposition \ref{prop 3} and equation (\ref{dV}), we get
\begin{align}
\mathrm{m}_q 
= & \ \frac{1}{2} c_q(n) \int_M \left(L_{(q)} + (2q-1)! \langle \left[ T_{(2q-1)\alpha}, A_{\beta} \right] \cdot e_{\alpha}^{\top}, e_{\beta}^{\top} \rangle \right) \frac{1}{\sqrt{G}}  dM \nonumber \\
{ } & \ + \frac{1}{2} (2q-1)! c_q(n) \int_{\Sigma} \left( \frac{|Df|^2}{1 + |Df|^2} \right)^q H_{(2q-1)} d\Sigma. \label{quase}
\end{align}
This finishes the proof of Theorem \ref{main 1.5}.

Let us now prove Theorem \ref{main 2}. We cannot use equation (\ref{quase}) directly, since, by hypothesis, 
$$ |Df| \to \infty \ \ \textrm{as} \ \ x \to \Sigma,$$
and hence, it is not possible to extend $f$ to a smooth function on some open set containing $\mathbb{R}^n \setminus \Omega$. To circumvent this problem, we proceed as in the last section of \cite{MV}. Namely, we consider an approximating sequence 
$$F^k = \left(f^{1,k}, \ldots, f^{m,k}\right) : \mathbb{R}^n \setminus \Omega \to \mathbb{R}^m,$$ $k \in \mathbb{N}$, of smooth maps such that each $F^k$ extends to a smooth map on some open set containing $\mathbb{R}^n \setminus \Omega$. We then apply (\ref{quase}) to each $F^k$ and take the limit as $k \to \infty$, reaching (\ref{eq main 2}).

It remains to prove inequality (\ref{eq_2 main 2}). If $\Sigma$ has only one component then, by a result of Guan and Li \cite{GL}, it holds
\begin{equation} \label{guan-li}
 \frac{1}{2} (2q-1)! c_q(n) \int_{\Sigma} H_{(2q-1)} d\Sigma \geq \frac{1}{2^q} 
\left( \frac{|\Sigma|}{\omega_{n-1}} \right)^{\frac{n-2q}{n-1}},
\end{equation}
with equality holding if and only if $\Sigma$ is a round sphere.

Suppose now that $\Sigma$ has more than one component. Recall that if $x_1, \ldots, x_m$ are nonnegative real numbers and $0 \leq s < 1$ then
\begin{equation} \label{s}
\sum_{i = 1}^m x_i^s \geq \left( \sum_{i=1}^m x_i \right)^s,
\end{equation}
with equality holding if and only if at most one of the $x_i$'s is positive (see \cite{HW}, Proposition 5.2). Inequality (\ref{eq_2 main 2}) then follows by combining inequalities (\ref{guan-li}) and  (\ref{s}).

\begin{rmk} {\rm Unfortunately our methods are not suitable to deal with the equality cases, that is, to prove the rigidity statements contained in conjectures \ref{PMT for m_q} and \ref{PI for m_q}. If equality holds in Theorem \ref{main 1}, then we can only conclude that the Gauss--Bonnet curvature $L_{(q)}$ is identically zero. If equality holds in Theorem \ref{main 2}, then we can only conclude that $L_{(q)}$ is zero on $M$ and that $\Sigma$ has only one component which is a round sphere.}
\end{rmk}

\begin{multicols}{2}
\noindent
Alexandre de Sousa \\
Universidade Federal de Alagoas \\
{\tt a.asm@protonmail.com} \\
Frederico Gir\~ao \\
Universidade Federal do Cear\'a \\
{\tt fred@mat.ufc.br} 
\end{multicols}

\end{document}